\theoremstyle{plain}
\newtheorem{theorem}{Theorem}
\newtheorem{proposition}{Proposition}
\newtheorem{corollary}{Corollary}
\newtheorem{lemma}{Lemma}
\theoremstyle{definition}
\newtheorem{definition}{Definition}
\theoremstyle{remark}
\newtheorem{remark}{Remark}
\newcommand{\Proj} {\mathbb{P}}
\newcommand{\z}  {\mathbf{Z}}
\newcommand{\ci} {\mathbf{C}}
\newcommand{\qu} {\mathbf{Q}}
\newcommand{\letr}[1] {\mathcal{1}}
\newcommand{\gq} {G_{\mathbf{Q}}}
\newcommand{\f } {\mathbf{F}}
\DeclareMathOperator{\aut}{Aut}
\DeclareMathOperator{\gl}{GL}
\DeclareMathOperator{\Gal}{Gal}
\DeclareMathOperator{\pgl}{PGL}
\DeclareMathOperator{\End}{End}
\DeclareMathOperator{\fr}{Frob}
\def\p{\mathfrak{p}}
\def\n{\mathfrak{n}}
\def\b{\mathfrak{b}}
\begin{document}
\title[Integral Frobenius for Abelian Varieties]
{Integral Frobenius for Abelian Varieties with Real Multiplication}
%first author

\author{{Tommaso Giorgio} {Centeleghe}}
\address{Universit\"at Heidelberg, Im Neuenheimer Feld 368
69120 Heidelberg, Germany}
\email{tommaso.centeleghe@gmail.com }
%\urladdr{http://www.iwr.uni-heidelberg.de/groups/arith-geom/centeleghe/}

\author{{Christian} {Theisen}}
\address{
Universit\"at Heidelberg, Im Neuenheimer Feld 368\\
69120 Heidelberg, Germany}
\email{c.theisen@stud.uni-heidelberg.de}
\urladdr{}

\maketitle

\begin{abstract}
In this paper we introduce the concept of {\it integral Frobenius} to formulate an integral analogue of the classical compatibility
condition linking the collection of rational Tate modules $V_\lambda(A)$ arising from abelian varieties over number fields with
real multiplication. Our main result gives a recipe for constructing an integral Frobenius when the real multiplication field has
class number one. By exploiting algorithms already existing in the literature, we investigate this construction for three modular
abelian surfaces over $\qu$.
\end{abstract}

\section{Introduction}\label{sec:intro}
Let $K$ be a number field, and $A$ an abelian variety over $K$ with real multiplication. By this we shall mean throughout that it is given a totally real number field $E$
of degree $[E:\qu]=\dim (A)$ together with an embedding
\begin{equation}\label{eq:RMdef}
\iota: O_E\hookrightarrow\End_K(A)
\end{equation}
of its ring of integers $O_E$ in the ring of $K$-endomorphisms of $A$.
%\footnote{In the literature one can find a more general definition of real multiplication, in which the ring $O_E$
%is replaced by an order of $E$.}
To simplify our notation we will omit the reference to $\iota$, and regard $O_E$ as a given subring of the endomorphism ring of $A$.

Let $\lambda$ be any finite prime of $E$, denote by $E_\lambda$ the completion of $E$ at $\lambda$,
and by $O_\lambda\subset E_\lambda$ the corresponding valuation ring. Consider
the $\lambda$-adic Tate module
\[
T_\lambda(A) = \varprojlim_{n} A[\lambda^n],
\]
and its rational version
\[
V_\lambda(A) = T_\lambda(A)\otimes\qu.
\]
From the embedding $O_E\subseteq\End_K(A)$ one deduces a structure of $O_\lambda$-module on $T_\lambda(A)$ and a structure of $E_\lambda$-vector space on
$V_\lambda(A)$. The module $T_\lambda(A)$ is free of rank two over $O_\lambda$, and $V_\lambda(A)$ is two-dimensional over $E_\lambda$ (see \cite{Ri}, Prop. 2.2.1).
These structures are compatible with the action of the absolute Galois group $G_K=\Gal(\bar K/K)$, where $\bar K$ is a fixed algebraic closure of $K$. Thus
we have two Galois representations:
\[
\rho_\lambda :G_K\longrightarrow\gl_{O_\lambda}(T_\lambda(A))\simeq\gl_2(O_\lambda),
\]
\[
\rho^0_\lambda :G_K\longrightarrow\gl_{E_\lambda}(V_\lambda(A))\simeq\gl_2(E_\lambda).
\]

%Notice that $\rho_\lambda$ defines an $O_\lambda$-stable lattice of $\rho^0_\lambda$. If the residual representation
%
Let now $\p$ be a finite prime of $K$ where $A$ has good reduction $A_\p$. Denote by $k_\p$ be the residue field of $\p$, by $q$ its cardinality and by
$p$ its characteristic. Let $\fr_\p\in G_K$ be an arithmetic Frobenius element at $\p$. As it is well known, the representations
\[
{\{\rho^0_\lambda\}}_{\lambda\nmid p}
\]
are all unramified at $\p$ and satisfy a compatibility condition that can be formulated as follows.

\smallskip

\emph{There exists a semi-simple conjugacy class $\Sigma_\p^0\subset \gl_2(E)$ such that for every $\lambda\nmid p$ the image of $\Sigma_\p^0$ in $\gl_2(E_\lambda)$ defines
the conjugacy class of $\rho^0_\lambda(\fr_\p)$.}

Moreover, the characteristic polynomial of $\Sigma_\p^0$ has coefficients in $O_E$, and can be computed from the Frobenius isogeny $\pi_\p$ of $A_\p$
(see \S~\ref{sec:reduction}).

\smallskip

We find it natural to investigate an integral analogue of the property above. To this purpose we raise the following questions.

\begin{enumerate}

\item\label{Q1} Is there a conjugacy class
\[
\Sigma_\p\subset\gl_2(O_E[1/p])
\]
such that for any $\lambda\nmid p$ the action of $\fr_\p$ on $T_\lambda(A)$ is described by the conjugacy class of $\gl_2(O_\lambda)$ containing the
image of $\Sigma_\p$?

\item\label{Q2} If such a $\Sigma_\p$ exists, how can we describe it?
\end{enumerate}

\begin{definition}\label{IFdef} A conjugacy class $\Sigma_\p$ satisfying the requirement of question \ref{Q1} will be called an \emph{integral Frobenius} of $A$ at $\p$.
\end{definition}

\smallskip

In this paper, in the case where $E$ has class number one, we construct an explicit matrix $\sigma_\p\in\gl_2(O_E[1/p])$ with entries in $O_E$
and show that its conjugacy class is an integral Frobenius of $A$ at $\p$. Our main result generalizes a theorem of Duke and T\`oth (see \cite{DT},
Theorem 2.1) who treated the case of elliptic curves using a different technique. We remark that the integral questions raised above can be considered for
more general compatible systems of Galois representations arising from geometry.

\smallskip

This article is a development of the Master Thesis of one of us (C. Theisen) at the University~of~Heidelberg. We wish to thank Gebhard B\"ockle for all
the useful discussions we had on this project. We thank Gaetan Bisson for an intense email correspondence where he helped us understanding better
his work. The first author thanks Jordi Gu\`ardia and Josep Gonz\`alez for the warm hospitality he received during his visit to Universitat Polit\`ecnica
de Catalunya in July 2015. This work was supported by the DFG Priority Program SPP 1489 and the Luxembourg FNR.

\section{The main result}\label{sec:mainresult}

In this section we keep the notation of the introduction and further assume that $E$ has class number one.

The injectivity of the reduction map (see \cite{CCO}, \S~1.4.4)
\[
r_\p:\End_K(A)\to\End_{k_\p}(A_\p)
\]
will be used thoughout to identify the ring $O_E\subseteq\End_K(A)$ with a subring of $\End_{k_\p}(A_\p)$. In particular, if $\lambda$ is a prime of $E$ not dividing $p$,
we can make sense of the Tate modules $T_\lambda(A_\p)$ and $V_\lambda(A_\p)$, defined as in the characteristic zero case.

The ring $\End_{k_\p}(A_\p)$ has two distinguished elements (not necessarily distinct)
given by the Frobenius isogeny $\pi_\p:A_\p\to A_\p$ relative to $k_\p$ and the corresponding Verschiebung $q/\pi_\p:A_\p\to A_\p$. The existence of the embedding
$O_E\subseteq\End_{k_\p}(A_\p)$ implies that $A_\p$ is $k_\p$-isogenous to a power of a $k_\p$-simple abelian variety over $k_\p$ (see Proposition~\ref{prop:isotypical}).
Hence the $\qu$-subalgebra
\[
\qu(\pi_\p)\subseteq\End_{k_\p}(A_\p)\otimes\qu
\]
generated by $\pi_\p$ is a number field, and $\pi_\p$ a Weil $q$-number of it.

The element $\pi_\p$ plays a central role in the problem formulated in the introduction, in
that, by means of the natural identification
\[
T_\lambda(A_\p ) = T_\lambda (A),
\]
the action induced by $\pi_\p$ on $T_\lambda(A_\p)$ corresponds to the Galois action of the arithmetic Frobenius $\fr_\p$ on $T_\lambda(A)$.
The semi-simplicity of $\pi_\p$ acting on $V_\lambda(A_\p)$ (see \cite{Ta}, p. 138) can then be used to deduce that of $\fr_\p$ acting on $V_\lambda(A)$.
The characteristic polynomial of these $E_\lambda$-linear actions, denoted by
\[
h_\p(x) = x^2 -a_\p x+ s_\p,
\]
is independent of $\lambda$ and has coefficients in $O_E$ (see Proposition~\ref{prop:charpoly}).

\bigskip

After having recalled these basic facts, we give a recipe to construct an integral Frobenius $\sigma_\p\in\gl_2(O_E[1/p])$. The construction is divided in two
cases, according to whether the discriminant of $h_\p(x)$ is zero or not.

\begin{itemize}
\item $a_\p^2-4s_\p=0$. This condition is equivalent to $\pi_\p\in O_E$ (see Proposition~\ref{prop:charpoly}), and in this case the problem is trivial: since $\pi_\p$ acts on $T_\lambda(A)$ via scalar multiplication,
the matrix
\begin{equation}\label{equation:recipe1}
\sigma_\p = \begin{pmatrix}
\pi_\p & 0\\
0        & \pi_\p\\
\end{pmatrix}
\end{equation}
gives an integral Frobenius of $A$ at $\p$. Since $E$ is totally real, the Weil number $\pi_\p$ is square root of $q$ and
\[
h_\p(x) = {(x-\pi_\p)}^2=x^2 -2\pi_\p x + q.
\]

\item $a_\p^2-4s_\p\neq 0$. This is the interesting case of the problem, and the definition of $\sigma_\p$ is more involved. For more details we refer to \S~\ref{sec:quadord}.
The $E$-algebra $L=E[\pi_\p]\subseteq \End_{k_\p}(A_\p)\otimes\qu$ is semi-simple and has dimension two over $E$. Inside $L$ there
is a chain
of $O_E$-orders given by
\[
O_E[\pi_\p]\subseteq S_\p\subseteq O_L,
\]
where $O_L$ denotes the integral closure of $O_E[\pi_\p]$ in $L$, and $S_\p$ is defined as
\[
S_\p = E[\pi_\p] \cap \End_{k_\p}(A_\p),
\]
the intersection being taken in $\End_{k_\p}(A_\p)\otimes\qu$. The $O_E$-discriminant of $O_E[\pi_\p]$ is the principal ideal $(a_\p^2-4s_\p)$, which
can be written as
\[
(a_\p^2-4s_\p) = \delta_{O_L}\cdot \b_{O_L}^2,
\]
where $\delta_{O_L}$ is the $O_E$-discriminant of $O_L$, and $\b_{O_L}$ is the $O_E$-conductor of $O_E[\pi_\p]$ in $O_L$.
Let $\b_\p\subseteq O_E$ be the divisor of $\b_{O_L}$ corresponding to the intermediate order $S_\p$ (see Proposition~\ref{prop:inje}), and choose a generator $b_\p\in\b_\p$.
Let $u_\p\in O_E$ be any element such that the ratio $(\pi_\p-u_\p)/b_\p$ belongs to $S_\p$ (see Proposition~\ref{prop:basis}).
The matrix $\sigma_\p$ is defined by the formula 
\begin{equation}\label{equation:recipe2}
\sigma_\p = \begin{pmatrix}
u_\p  & -\dfrac{u_\p^2-a_\p u_\p + s_\p}{b_\p}\\
b_\p  & a_\p-u_\p \\
\end{pmatrix},
\end{equation}
from which it is easily checked that its characteristic polynomial is $h_\p(x)$.
\end{itemize}

\smallskip

\noindent Our main result says:

\begin{theorem}\label{theorem:main} The matrix $\sigma_\p$ has coefficients in $O_E$ and defines an integral Frobenius $\Sigma_\p$ of $A$ at $\p$.
\end{theorem}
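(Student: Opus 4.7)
The trivial case $a_\p^2-4s_\p=0$ is handled by the authors' own remark, so my plan addresses only the separable case $a_\p^2-4s_\p\neq 0$. Setting $\alpha=(\pi_\p-u_\p)/b_\p\in S_\p$, the strategy has two movements: first realize $\sigma_\p$ as the matrix of multiplication by $\pi_\p$ on the free $O_E$-module $S_\p$ in the basis $(1,\alpha)$, and second show that at every prime $\lambda\nmid p$ the Tate module $T_\lambda(A)$ is isomorphic, as an $O_\lambda$-module with $\pi_\p$-action, to $S_\p \otimes_{O_E} O_\lambda$.

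The first movement is algebraic. Since $\alpha \in O_L$ is integral over $O_E$, its minimal polynomial has $O_E$-coefficients; substituting $\pi_\p = u_\p + b_\p \alpha$ into $h_\p(\pi_\p)=0$ gives the relation $\alpha^2 = \tfrac{a_\p-2u_\p}{b_\p}\alpha - \tfrac{h_\p(u_\p)}{b_\p^2}$, so both $b_\p \mid a_\p - 2u_\p$ and $b_\p^2 \mid h_\p(u_\p)$ in $O_E$. In particular $h_\p(u_\p)/b_\p \in O_E$, so $\sigma_\p \in M_2(O_E)$, and the same relation exhibits $\sigma_\p$ as the matrix of multiplication by $\pi_\p$ on $S_\p$ in the basis $(1,\alpha)$, with characteristic polynomial $h_\p(x)$.

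The second movement is the heart. Through the injection $r_\p$, the ring $S_\p$ acts on $A_\p$ and hence on $T_\lambda(A)=T_\lambda(A_\p)$, turning the latter into a finitely generated $S_\p \otimes_{O_E} O_\lambda$-module. Since $h_\p(x)$ is separable in the case at hand, the $\pi_\p$-action on $V_\lambda(A)$ is semisimple with two distinct eigenvalues and makes $V_\lambda(A)$ a free module of rank one over $L_\lambda = E_\lambda[\pi_\p]$. Thus $T_\lambda(A)$ sits inside $V_\lambda(A) \simeq L_\lambda$ as a full fractional ideal over its multiplier ring $R_\lambda = \{x \in L_\lambda : x\cdot T_\lambda(A) \subseteq T_\lambda(A)\}$, and the key claim is $R_\lambda = S_\p \otimes_{O_E} O_\lambda$. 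My plan is to invoke Tate's isomorphism $\End(A_\p) \otimes \zl \xrightarrow{\sim} \End_{G_{k_\p}}(T_\ell(A_\p))$ to identify $R_\lambda$ with the $\lambda$-component of the commutant of $O_E$ in $\End(A_\p)\otimes \zl$, and then to check that the commutant of $E$ in $\End^0_{k_\p}(A_\p)$ equals $L$: the containment $L \subseteq Z(E)$ is clear since $\pi_\p$ is central in $\End^0_{k_\p}(A_\p)$, and the reverse inclusion follows from $Z(E)\otimes_E E_\lambda$ embedding into the commutant of $\pi_\p$ inside $\End_{E_\lambda}(V_\lambda(A)) \cong M_2(E_\lambda)$, which is $L_\lambda$ and hence of $E_\lambda$-dimension two. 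Intersecting $L$ with $\End_{k_\p}(A_\p)$ returns $S_\p$, and tensoring by $O_\lambda$ gives $R_\lambda = S_\p \otimes O_\lambda$.

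Once the multiplier-ring equality is in hand, a standard fact on quadratic $O_\lambda$-orders implies that $T_\lambda(A)$ is an invertible $R_\lambda$-module, and any invertible module of rank one over the semi-local ring $R_\lambda$ is free. Transporting the basis $(1 \otimes 1,\alpha \otimes 1)$ of $S_\p \otimes_{O_E} O_\lambda$ through any such isomorphism produces an $O_\lambda$-basis of $T_\lambda(A)$ in which $\fr_\p$ acts via the matrix $\sigma_\p$, proving that $\rho_\lambda(\fr_\p)$ and $\sigma_\p$ are $\gl_2(O_\lambda)$-conjugate. The main obstacle I foresee is the multiplier-ring identification: it requires using Tate's theorem with the $E$-structure built in, and the cleanness of the two-dimensional commutant computation ultimately rests on $\pi_\p$ being central in $\End^0_{k_\p}(A_\p)$, a feature specific to the geometric Frobenius over a finite residue field.
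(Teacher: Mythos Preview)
Your proposal is correct and follows essentially the same route as the paper: both arguments use Tate's theorem to identify $\End_{O_\lambda[\pi_\p]}(T_\lambda(A))$ with $S_\p\otimes_{O_E}O_\lambda$ (your multiplier ring $R_\lambda$), then deduce that $T_\lambda(A)$ is free of rank one over this order, and finally read off the matrix $\sigma_\p$ from the $O_E$-basis $(1,\alpha)$ of $S_\p$. The one substantive difference is in how freeness is obtained: the paper isolates a self-contained Nakayama-style lemma (their Lemma~1) showing directly that $R^2$ is free of rank one over $\End_{R[F]}(R^2)$ for any product of DVRs $R$, whereas you appeal to the Gorenstein property of quadratic $O_\lambda$-orders (proper fractional ideals are invertible) together with triviality of the Picard group over a semi-local ring. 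Both are valid; indeed the paper itself notes in a footnote that the Gorenstein viewpoint gives an alternative route. Your commutant computation $Z_{\End^0(A_\p)}(E)=L$ is also the paper's, though they phrase it as ``$L$ is maximal commutative semi-simple, hence equals its own commutant''---the content is identical since $\pi_\p$ is central.
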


In more concrete terms, the theorem says that for any finite prime $\lambda$ of $E$ not dividing $p$, the Tate module $T_\lambda(A)$ admits an $O_\lambda$-basis
such that the action of $\fr_\p$ on $T_\lambda(A)$ in the coordinates of this basis is given by $\sigma_\p$. In particular we deduce:
\begin{corollary} For any ideal $\n\subseteq O_E$ relatively prime to $\p$, the matrix $\sigma_\p$ describes the action of $\fr_\p$ on the $\n$-torsion points
$A[\n]$ of $A$, in the coordinates of a suitable $O_E/\n$-basis.
\end{corollary}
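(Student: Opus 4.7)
The plan is to deduce the corollary from Theorem \ref{theorem:main} by combining reduction mod prime powers with the Chinese Remainder Theorem. The coprimality hypothesis on $\n$ and $\p$ means in particular that every prime divisor $\lambda$ of $\n$ in $O_E$ lies above a rational prime different from $p$, which is exactly what is needed to invoke the theorem for each such $\lambda$.

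First, I would factor $\n=\prod_i\lambda_i^{e_i}$ into its prime power components and use the Chinese Remainder Theorem to obtain a $G_K$-equivariant decomposition of $O_E$-modules
\[
A[\n]\simeq\bigoplus_i A[\lambda_i^{e_i}],
\qquad
O_E/\n\simeq\bigoplus_i O_E/\lambda_i^{e_i}.
\]
This reduces the problem to exhibiting, for each $\lambda=\lambda_i$ and $e=e_i$, an $O_E/\lambda^e$-basis of $A[\lambda^e]$ in which $\fr_\p$ acts by the reduction of $\sigma_\p$, since the concatenation of two elements from each summand yields a free rank-two basis of $A[\n]$ over the product ring $O_E/\n$.

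Next, for a fixed $\lambda\nmid p$ dividing $\n$, I would use the standard identification
\[
A[\lambda^e]\;=\;T_\lambda(A)/\lambda^e T_\lambda(A),
\]
together with the canonical isomorphism $O_E/\lambda^e\simeq O_\lambda/\lambda^e$ that intertwines the two module structures. By Theorem \ref{theorem:main}, $T_\lambda(A)$ admits an $O_\lambda$-basis in which $\fr_\p$ acts via the matrix $\sigma_\p\in\gl_2(O_E)$; reducing this basis modulo $\lambda^e$ produces a free $O_\lambda/\lambda^e$-basis of $A[\lambda^e]$ in whose coordinates $\fr_\p$ acts by the image of $\sigma_\p$ in $\gl_2(O_\lambda/\lambda^e)=\gl_2(O_E/\lambda^e)$.

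Assembling these local bases via the CRT decomposition above produces the desired $O_E/\n$-basis of $A[\n]$ realizing $\fr_\p$ as $\sigma_\p\bmod\n$. There is no real obstacle in this argument — it is a purely formal consequence of Theorem \ref{theorem:main} once one carefully matches the ring-theoretic and module-theoretic CRT decompositions; the only point worth explicit verification is that $\sigma_\p$ already has entries in $O_E$ (not merely in $O_E[1/p]$), which is part of the theorem's conclusion and ensures that its reduction modulo any $\n$ coprime to $p$ is well defined.
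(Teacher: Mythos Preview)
Your argument is correct and is precisely the standard unwinding that the paper leaves implicit: the authors give no formal proof of the corollary, stating only that it is ``essentially a reformulation of the main result,'' and your CRT decomposition together with the identification $A[\lambda^e]=T_\lambda(A)/\lambda^e T_\lambda(A)$ is exactly how that reformulation is made precise. There is nothing to add.
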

The corollary, which is essentially a reformulation of the main result, emphasises a connection of our work to \cite{DT}.

In the non-trivial case $a_\p^2-4s_\p\neq 0$,
write
\begin{equation}\label{eq:sigmadecomp}
\sigma_\p = \begin{pmatrix}
u_\p  &0\\
0 & u_\p \\
\end{pmatrix}
+ b_\p\begin{pmatrix}
0  & -\dfrac{u_\p^2-a_\p u_\p + s_\p}{b^2_\p}\\
1  & - \dfrac{2u_\p - a_\p}{b_\p} \\
\end{pmatrix}.
\end{equation}
From the construction of $\sigma_\p$ it follows that the matrix $(\sigma_\p-u_\p)/b_\p$ appearing in the right hand side of \eqref{eq:sigmadecomp} has coefficients in $O_E$
(see Proposition~\ref{prop:basis}). Thus from Theorem~\ref{theorem:main} we deduce the following interesting property of the ideal $\b_\p$.
For any prime-to-$p$ ideal $\n\subseteq O_E$ we have:
\[
\begin{gathered} \text{ $\fr_\p$ acts on $A[\n]$ as scalar}\\
\text{multiplication by an element in $O_E/\n$}
\end{gathered} \iff \text{ $\n$ divides $\b_\p$.}
\]
This equivalence can be linked to prime splitting phenomena in Galois extensions of number fields. Extend the definition of $b_\p$
by setting it equal to zero when $a_\p^2-4s_\p=0$. Let $\n$ be a nonzero ideal of $O_E$, and consider the projective Galois representation
\[
\Proj(\bar\rho_\n):G_K\longrightarrow\Proj\aut_{O_E/\n}(A[\n]) \simeq \pgl_2(O_E/\n)
\]
obtained from the $\n$-torsion of $A$. Let $K(\Proj A[\n])/K$ be the Galois extension of $K$ corresponding to the kernel of $\Proj(\bar\rho_\n)$.

\begin{corollary}\label{cor:split} Let $\p$ a prime of $K$ where $A$ has good reduction, and let $\n\subseteq O_E$ an ideal relatively prime to $p$.
Then $\p$ splits completely in $K(\Proj A[\n])/K$ if and only if $\n$ divides $\b_\p$.
\end{corollary}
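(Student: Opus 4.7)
The plan is to reduce the splitting condition to a statement about the Frobenius action on $A[\n]$, and then translate that action condition into the divisibility $\n\mid\b_\p$ by inspecting the explicit form of $\sigma_\p$ given in \eqref{eq:sigmadecomp}.

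First, I would verify that $\p$ is unramified in $K(\Proj A[\n])/K$. Since $A$ has good reduction at $\p$ and $\n$ is prime to $p$, the N\'eron--Ogg--Shafarevich criterion shows that $\bar\rho_\n$ is unramified at $\p$, hence so is $\Proj(\bar\rho_\n)$. Because $K(\Proj A[\n])/K$ is the extension cut out by $\ker\Proj(\bar\rho_\n)$, classical Galois theory tells us that $\p$ splits completely in it if and only if $\Proj(\bar\rho_\n)(\fr_\p)=1$ in $\pgl_2(O_E/\n)$. This is in turn equivalent to $\bar\rho_\n(\fr_\p)$ being a scalar matrix in $\gl_2(O_E/\n)$, i.e., to $\fr_\p$ acting on $A[\n]$ as scalar multiplication by an element of $(O_E/\n)^\times$.

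The second step is to replace this action condition by a condition on $\sigma_\p$. By Theorem~\ref{theorem:main} and the corollary immediately following it, the action of $\fr_\p$ on $A[\n]$ is described, in some $O_E/\n$-basis, by the reduction of $\sigma_\p$ modulo $\n$. Since being a scalar matrix is conjugation-invariant, ``$\fr_\p$ acts as scalar on $A[\n]$'' is equivalent to ``$\sigma_\p \bmod \n$ is scalar in $M_2(O_E/\n)$''. So it remains to show that $\sigma_\p\equiv$ scalar $\pmod{\n}$ if and only if $\n\mid\b_\p$.

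Finally, I would read this off directly from \eqref{eq:sigmadecomp}. In the non-trivial case $a_\p^2-4s_\p\neq 0$, that decomposition exhibits $\sigma_\p - u_\p I$ as $b_\p$ times a matrix with entries in $O_E$, so $\n\mid\b_\p=(b_\p)$ forces $\sigma_\p\equiv u_\p I\pmod{\n}$, which is scalar. Conversely, if $\sigma_\p\bmod\n$ is scalar, its lower-left entry $b_\p$ must vanish modulo $\n$, giving $\n\mid\b_\p$. The trivial case $a_\p^2-4s_\p=0$ is settled by the convention $b_\p=0$: the matrix of \eqref{equation:recipe1} is already scalar, $\p$ splits in every such extension, and $\b_\p=(0)$ is divisible by every ideal. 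The only nontrivial input beyond Theorem~\ref{theorem:main} is the unramifiedness at $\p$; once that is in hand the argument is a direct reading of the entries of $\sigma_\p$, so I do not expect a real obstacle here.
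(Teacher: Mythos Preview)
Your proposal is correct and follows essentially the same approach as the paper: the paper establishes the displayed equivalence ``$\fr_\p$ acts as a scalar on $A[\n]$ $\iff$ $\n\mid\b_\p$'' from the decomposition \eqref{eq:sigmadecomp} just before stating the corollary, and then observes (implicitly) that complete splitting in $K(\Proj A[\n])/K$ is equivalent to $\Proj(\bar\rho_\n)(\fr_\p)=1$, i.e., to $\fr_\p$ acting as a scalar. Your write-up is in fact more explicit than the paper's, spelling out the unramifiedness via N\'eron--Ogg--Shafarevich and the conjugation-invariance of the scalar condition.
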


In the case $a_\p^2-4s_\p\neq 0$, the matrix $\sigma_\p$, whose definition might appear quite mysterious,
represents the multiplication action of $\pi_\p$ on the ring $S_\p$ in the coordinates of a suitable $O_E$-basis (see \S~\ref{sec:quadord}, Remark~\ref{matrixsigmap}).
The main result relies on the key observation that the $\ell$-adic Tate module $T_\ell(A)$ is free of rank one over $S_\p\otimes\z_\ell$.
In \S~\ref{sec:reduction} we prove some basic facts on reduction in positive characteristic of abelian varieties
with real multiplication. In \S~\ref{sec:quadord} we discuss orders in quadratic extensions of number fields useful to understand the details of the construction
of $\sigma_\p$. In \S~\ref{sec:proof} we give the proof of Theorem~\ref{theorem:main}. Lastly, in \S\S~\ref{sec:comp}
and \ref{sec:tab} we exploit algorithms already existing in the literature (see \cite{Bis} and \cite{GGG}) to make computational investigations
with three modular abelian surfaces over $\qu$.

\section{Reduction of abelian varieties with real multiplication}\label{sec:reduction}
%
%In this section we prove some basic facts on the reductions in positive characteristic of abelian varieties with real multiplication. The results are
%well known to experts, we included them for the benefit of the reader.

We keep the notation and assumptions of the first two sections, so that $A$ is an abelian variety over a number field $K$ with
real multiplication by $E$, and $\p$ is a place of $K$ where $A$ has good reduction $A_\p$. We denote by $k_\p$ the residue field of $K$ at $\p$,
by $p$ its characteristic, and by $q=p^a$ its cardinality, where $a=[k_\p:\f_p]$. As before, $\pi_\p:A_\p\to A_\p$ denotes the Frobenius isogeny relative
to $k_\p$. The reduction of the real multiplication on $A$ gives inclusion
\begin{equation}\label{realmultp}
O_E\subseteq\End_{k_\p}(A_\p).
\end{equation}
The existence of this subring has the following consequence.\footnote{The fact that the variety $A_\p$ arises as reduction from characteristic zero plays
no role in Propositions~\ref{prop:isotypical} and \ref{prop:charpoly}.}

\begin{proposition}\label{prop:isotypical} The abelian variety $A_\p$ is isotypical, i.e., it is $k_\p$-isogenous to $B^n$, where $B$ is a $k_\p$-simple abelian variety
and $n$ is an integer $>0$.
\end{proposition}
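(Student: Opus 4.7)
The plan is to prove isotypicity by contradiction, combining Poincar\'e reducibility, a dimension bound on commutative \'etale subalgebras of the endomorphism algebra, and the Honda--Tate description of endomorphism algebras of simple abelian varieties over finite fields, with the real multiplication hypothesis entering decisively at the very end. Write $A_\p\sim\prod_{i=1}^r B_i^{n_i}$ with pairwise non-$k_\p$-isogenous $k_\p$-simple $B_i$, and set $D_i=\End^0_{k_\p}(B_i)$, $K_i=Z(D_i)=\qu(\pi_{B_i})$, so that $\End^0_{k_\p}(A_\p)=\prod_i M_{n_i}(D_i)$. The goal is to show that $r=1$.

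The first step is a dimension count. Because $E$ is a field, each projection $E\to M_{n_i}(D_i)$ of the inclusion $E\subset\End^0_{k_\p}(A_\p)$ must be injective: if it vanished, $1_E=1$ would act as zero on $B_i^{n_i}$. Next one uses the standard fact that a commutative \'etale $\qu$-subalgebra $F$ of $\End^0_{k_\p}(Y)$ acts faithfully on $V_\ell(Y)$, and that a commutative \'etale $\ql$-subalgebra of $M_n(\ql)$ has $\ql$-dimension at most $n$; hence $[F:\qu]\le 2\dim Y$. Applied to $E\hookrightarrow\End^0_{k_\p}(B_i^{n_i})$, this gives $[E:\qu]\le 2n_i\dim B_i$ for each $i$. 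Summing yields $r[E:\qu]\le 2\dim A_\p=2[E:\qu]$, hence $r\le 2$.

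To rule out $r=2$, the plan is as follows. Equality is forced throughout the preceding inequality, so $E$ is a maximal commutative \'etale subalgebra of the central simple $K_i$-algebra $M_{n_i}(D_i)$. Since $K_i$ is central, $E\cdot K_i$ is still commutative \'etale (being a quotient of $E\otimes_\qu K_i$) and contains $E$; by maximality $E\cdot K_i=E$, so $K_i\subseteq E$, and in particular $\pi_{B_i}\in E$. Because $E$ is totally real, $\pi_{B_i}$ is a real algebraic number, and the Weil relation $|\pi_{B_i}|=\sqrt q$ forces $\pi_{B_i}=\pm\sqrt q$. In particular $K_i$ is totally real, and by Honda--Tate the division algebra $D_i$ has nontrivial local invariant $1/2$ at every real place of $K_i$, i.e.\ $D_i$ is non-split there.

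The contradiction now comes from the standard fact that a maximal subfield $L$ of a central simple algebra over a field $F$ must split the algebra over $F$. Applied to $E\supseteq K_i$ inside $M_{n_i}(D_i)$, this forces $E$ to split $D_i$ over $K_i$, hence place by place. At a real place $v$ of $K_i$ the completion $D_{i,v}$ is the non-split quaternion $\re$-algebra, while $E$ totally real yields $E_w=\re$ for every $w\mid v$, so $E_w\otimes_{K_{i,v}}D_{i,v}=D_{i,v}$ remains non-split, contradicting the splitting required by maximality. Therefore $r=1$. I expect the subtlest steps to be the careful passage from the dimension bound to the statement that $E$ is a genuine \emph{maximal} subfield containing the center $K_i$, and the correct invocation of Honda--Tate to identify the archimedean local invariants of $D_i$ when $\pi_{B_i}$ is real.
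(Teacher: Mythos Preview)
Your argument is correct and uses the same core ingredients as the paper's proof: the dimension bound on commutative \'etale subalgebras of $\End^0_{k_\p}(B_i^{n_i})$, the Honda--Tate formula $2\dim B_i=s_i[K_i:\qu]$, and the final contradiction that a totally real field cannot split a quaternion algebra ramified at an archimedean place.

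The organization differs slightly. The paper first splits into two cases according to whether some $\pi_{i_0}$ is non-real; in that case the compositum $\mu_{i_0}(E)\cdot\qu(\pi_{i_0})$ is strictly larger than $\mu_{i_0}(E)$, and the dimension bound on that compositum forces $h=1$ directly. Only in the residual case where every $\pi_i$ is real does the paper invoke the splitting contradiction, after a further subdivision on the parity of $a=[k_\p:\f_p]$. Your route is more uniform: from $[E:\qu]\le 2n_i\dim B_i$ for every $i$ you obtain $r\le 2$ in one stroke, and then the equality forced when $r=2$ makes $E$ a strictly maximal subfield of $M_{n_i}(D_i)$ over $K_i$ (indeed $[E:K_i]=n_is_i$), which simultaneously yields $K_i\subseteq E$ and the splitting of $D_i$ by $E$, leading to the same archimedean contradiction without any case analysis. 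The paper's version is perhaps more concrete (it identifies the $B_i$ as supersingular elliptic curves when $a$ is even), while yours is shorter and avoids the parity discussion entirely.
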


\begin{proof} Consider a $k_\p$-isogeny
\[
f:A_\p\longrightarrow\prod_{1\leq i\leq h} B_i^{n_i}
\]
from $A_\p$ into the product of powers of $k_\p$-simple, pairwise non-$k_\p$-isogenous abelian varieties $B_i$, with $n_i>0$. We clearly have
$[E:\qu]=\dim(A_\p)=\sum_in_i\dim(B_i)$. To prove the lemma we have to show that $h=1$.

The isogeny $f$ induces an identification
\begin{equation}\label{equation:decEndAp}
\End_{k_\p}(A)\otimes\qu\simeq\prod_{1\leq i\leq h}M_{n_i}(D_i), % \End_{k_\p}^{\rm op}(B_i))
\end{equation}
where $M_{n_i}(D_i)$ is the ring of $n_i$-by-$n_i$ matrices with coefficients in the division ring $D_i=\End_{k_\p}(B_i)\otimes\qu$.

Let $\pi_i\in D_i$ be the Frobenius isogeny of $B_i$ relative to $k_\p$. The subfield $\qu(\pi_i)\subseteq D_i$ is the center of $D_i$ (see \cite{Ta}, Theorem~2 (a)), and a standard formula
from Honda-Tate theory says that
\begin{equation}\label{equation:dimformula}
2\dim(B_i) = s_i[\qu(\pi_i):\qu],
\end{equation}
where $s_i$ is the index of $D_i$, i.e., the square root of the degree $[D_i:\qu(\pi_i)]$.

The inclusion $E\subseteq\End_{k_\p}(A)\otimes\qu$ projects into each
factors of the decomposition \eqref{equation:decEndAp} and gives, for any $i$, an embedding
\begin{equation}\label{equation:emb}
\mu_i:E\longrightarrow M_{n_i}(D_i).
\end{equation}
We first complete the proof of the proposition assuming that there exist an index $i_0$ such that $\pi_{i_0}$ is not a
real Weil $q$-number.

Under this assumption we see that the compositum $L_{i_0}=\mu_{i_0}(E)\qu(\pi_{i_0})$ inside $M_{n_{i_0}}(D_{i_0})$
is a semi-simple commutative subalgebra of $M_{n_{i_0}}(D_{i_0})$ containing the center $\qu(\pi_{i_0})$ and strictly containing
the field $\mu_{i_0}(E)\simeq E$. Since the degree over $\qu(\pi_{i_0})$ of any commutative semi-simple subalgebra $L\subseteq M_{n_{i_0}}(D_{i_0})$
is bounded by $n_{i_0}s_{i_0}$, we conclude from \eqref{equation:dimformula} that
\[
2[E:\qu]\leq [L_{i_0}:\qu]\leq n_{i_0}2\dim(B_{i_0}),
\]
which readily implies that $h=1$, given that $[E:\qu]=\sum_i n_i\dim(B_i)$.

We are left with proving the proposition in the case where all Frobenius isogenies $\pi_i$ define real Weil $q$-numbers.
If $a$ is odd the proposition holds simply because there is \emph{only one} real Weil $q$-number, up to conjugation, namely that given
by $\sqrt q$, a real quadratic algebraic integer.

If $a$ is even there are precisely two distinct conjugacy classes of real Weil $q$-numbers, given by the integers $q^{a/2}$ and $-q^{a/2}$, and
the isogeny $f$ above has the form
\[
f:A\longrightarrow B_1^{n_1}\times B_2^{n_2},
\]
for some $n_1, n_2\geq 0$, where the Frobenius isogenies $\pi_1$ and $\pi_2$ are given by multiplication by $q^{a/2}$ and $-q^{a/2}$, respectively. As it turns out, both
$B_1$ and $B_2$ are supersingular elliptic curves (which are not $k_\p$-isogenous to each other) with all geometric endomorphisms defined
over $k_\p$. Their endomorphisms algebras $D_1$ and $D_2$ are both isomorphic to the definite $\qu$-quaternion $D$ ramified at $p$,
and we have $[E:\qu]=n_1+n_2$.

Arguing by contradiction, assume that both $n_1$ and $n_2$ are $>0$, and consider as above the two embeddings
\[
\mu_i:E\rightarrow M_{n_i}(D),
\]
for $i=1, 2$. Since $2n_i$ is the degree over $\qu$ of any commutative semi-simple subalgebra $L_i\subseteq M_{n_i}(D)$, we easily see that
$n_1=n_2$ and that $\mu_i(E)$ is maximal commutative subfield of $M_{n_i}(D)$. It follows that $\mu_1(E)$ is a splitting field for $M_{n_1}(D)$ and
hence it is also a splitting field for $D$. This is a contradiction since $\mu_1(E)$ is totally real, whereas every splitting field of the definite quaternion $D$
cannot have a real place. This completes the proof of the proposition. \qed
\end{proof}

Proposition~\ref{prop:isotypical} is equivalent to the statement that $\qu(\pi_\p)$ is a field (and not just a product of fields). In this way we see that $\pi_\p$ defines a
Weil $q$-number of the number field $\qu(\pi_\p)$. The complex conjugate of $\pi_\p$, with respect to any embedding $\qu(\pi_\p)\subset\ci$, is the Verschiebung
isogeny $q/\pi_p$ of $A_\p$.

Consider now the commutative subalgebra $E[\pi_\p]\subseteq \End_{k_\p}(A_\p)\otimes\qu$, and let $g_\p(x)$ be the minimal polynomial of $\pi_\p$ over $E$.
Since $\qu(\pi_\p)$ is semi-simple, so is $E[\pi_\p]$ and $g_\p(x)$ has non-zero discriminant. Moreover, the degree $[E[\pi_\p]:E]$ is either $1$ or $2$,
according to whether $\pi_\p$ belongs to $O_E$ or not, respectively. This can be seen using \eqref{equation:dimformula} and reasoning as in the proof of
Proposition~\ref{prop:isotypical}. Set now 
\[
h_\p(x) = \left\{\begin{matrix}
g_\p^2(x),\;\text{ if $\pi_\p\in O_E$}\\
g_\p(x),\;\text{ if $\pi_\p\not\in O_E$}\\
\end{matrix}\right..
\]
Since $\pi_\p$ is an algebraic integer, the polynomials $h_\p(x)$ and $g_\p(x)$ have coefficients in $O_E$. Moreover notice that $\pi_\p\in O_E$ if and only if $\pi_\p\in E$.

\begin{proposition}\label{prop:charpoly} Let $\lambda$ be a prime of $E$ not dividing $p$. The polynomial $h_\p(x)$ is the characteristic polynomial of the $E_\lambda$-linear
action induced by $\pi_\p$ on $V_\lambda(A)$. Its discriminant is zero if and only if $\pi_\p\in O_E$.
\end{proposition}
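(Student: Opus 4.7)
The plan is to split into the two cases $\pi_\p \in O_E$ and $\pi_\p \notin O_E$, dispose of the first trivially, and reduce the second to a short impossibility statement in the commutative algebra $E[\pi_\p]\otimes\ql$.

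If $\pi_\p \in O_E$, then $\pi_\p$ acts on $V_\lambda(A)$ as scalar multiplication through the inclusion $O_E \hookrightarrow O_\lambda$, so the characteristic polynomial of the action is $(x-\pi_\p)^2 = g_\p(x)^2 = h_\p(x)$, with discriminant manifestly zero. In the other case, $h_\p = g_\p$ has degree two, and its discriminant is nonzero because the $E$-algebra $E[\pi_\p]$ was noted to be semi-simple. Writing $P_\lambda(x), m_\lambda(x) \in E_\lambda[x]$ for the characteristic and minimal polynomials of $\pi_\p$ acting on $V_\lambda(A)$, the identity $g_\p(\pi_\p)=0$ in $\End_{k_\p}(A_\p)\otimes\qu$ forces $m_\lambda \mid g_\p$ in $E_\lambda[x]$. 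If $m_\lambda = g_\p$, then the monic degree-two polynomial $P_\lambda$ must coincide with $g_\p = h_\p$, and one is done; so the task reduces to ruling out the possibility that $m_\lambda$ is linear, i.e., that $\pi_\p$ acts on $V_\lambda(A)$ as some scalar $\alpha \in E_\lambda$.

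To accomplish this, I would work with the idempotent $e_\lambda \in E\otimes\ql$ projecting onto $E_\lambda$ in the factorization $E\otimes\ql = \prod_{\lambda'\mid\ell} E_{\lambda'}$, and view $\alpha e_\lambda$ as an element of $E\otimes\ql \subseteq \End_{k_\p}(A_\p) \otimes\ql$. The product $(\pi_\p - \alpha e_\lambda)\,e_\lambda$ then annihilates every summand of $V_\ell(A_\p) = \prod_{\lambda'\mid\ell} V_{\lambda'}(A_\p)$: it kills $V_\lambda(A)$ by the scalar assumption, and it kills $V_{\lambda'}(A)$ for $\lambda'\neq\lambda$ because $e_\lambda$ already acts as zero on those summands. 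Since $\End_{k_\p}(A_\p)\otimes\ql$ embeds faithfully into $\End_\ql(V_\ell(A_\p))$, the relation $(\pi_\p - \alpha e_\lambda)\,e_\lambda = 0$ must already hold in $\End_{k_\p}(A_\p) \otimes \ql$. Reading this identity in the $\lambda$-component of
\[
E[\pi_\p]\otimes\ql \;\cong\; \prod_{\lambda'\mid\ell} E_{\lambda'}[x]/(g_\p(x))
\]
then translates to $x \equiv \alpha \pmod{g_\p(x)}$ in $E_\lambda[x]$, which is absurd since $g_\p$ has degree two. This contradiction will force $m_\lambda = g_\p$ and complete the proof.

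The only delicate ingredient I foresee is the faithfulness of the $\End_{k_\p}(A_\p)\otimes\ql$-action on $V_\ell(A_\p)$; this is standard (and also follows from Tate's theorem identifying the endomorphism algebra with the Galois commutant of the rational Tate module), but it is precisely the point that upgrades a scalar hypothesis at a single $\lambda$ into a polynomial divisibility that obviously fails on degree grounds.
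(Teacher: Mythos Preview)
Your proof is correct and follows the same two-case split as the paper. The case $\pi_\p\in O_E$ is handled identically. In the case $\pi_\p\notin O_E$, the paper's argument is extremely terse: it simply says that since $g_\p$ has degree two, the characteristic polynomial on $V_\lambda(A)$ ``must'' be $g_\p$, without addressing why the minimal polynomial $m_\lambda$ cannot be a proper linear factor of $g_\p$ in $E_\lambda[x]$. Your faithfulness argument supplies exactly this missing step: by showing that $(\pi_\p-\alpha e_\lambda)e_\lambda$ would vanish in $\End_{k_\p}(A_\p)\otimes\ql$ and then reading this in the $\lambda$-component $E_\lambda[x]/(g_\p)$ of $E[\pi_\p]\otimes\ql$, you obtain the degree contradiction that rules out scalar action. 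So the overall strategy is the same as the paper's, but you have made explicit the one nontrivial point the paper glosses over; the faithfulness input you flag is indeed standard and suffices.
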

\begin{proof} If $\pi_E\in O_E$, then $V_\lambda(\pi_\p)$ is given by scalar multiplication by $\pi_\p$ itself. We have $g_\p(x)=(x-\pi_\p)$ and $h_\p(x)=g_\p^2(x)$.
If $\pi_E\not\in O_E$, then $g_\p(x)$ has degree two and thus we must have $h_\p(x)=g_\p(x)$. The last statement of the proposition follows from the fact that $g_\p(x)$
has distinct roots.
\qed
\end{proof}

\begin{remark}\label{remark:onhp}
Let $a_\p$ and $s_\p$ respectively denote the trace and the determinant of the $E_\lambda$-linear action induced by $\pi_\p$ on $V_\lambda(A_\p)$, so that we have
\[
h_\p(x) = x^2-a_\p x + s_\p.
\]
The following can be said about the coefficients of $h_\p(x)$. If $\pi_\p\in O_E$, then $\pi_\p$ is a real Weil $q$-number, hence its square is equal to $q$. In this case we have $a_\p=2\pi_\p$ and $s_\p=q$.
If $\pi_\p\not\in O_E$ and $\pi_\p$ is not real, then $h_\p(x)$ is irreducible in $E[x]$, and we have $a_\p=\pi_\p+q/\pi_\p$ and $s_\p=q$. Finally,
if $\pi_\p\not\in O_E$ and $\pi_\p$ is real, then $a_\p=0$ and $s_\p=-q$. This last case can only occur if $a$ is odd, and $h_\p(x)$ is reducible if and only if $O_E$ contains a square root of $q$.
\end{remark}

%Proposition \ref{prop:charpoly} shows how the characteristic polynomial of the semi-simple conjugacy class $\Sigma_\p^0\subset\gl_2(E)$ (see \S~\ref{sec:intro}) can be
%computed from $\pi_\p$.

\smallskip

We conclude the section with the following observation.

\begin{proposition} Assume that there is a place $\p$ of $K$ of good reduction for $A$ such that $\End_{k_\p}(A_\p)$ is commutative. Then
$E$ is the unique subfield of $\End_K(A)\otimes\qu$ which is totally real and has degree $\dim(A)$.
%This is to say that
%$A$ has a unique real multiplication.
\end{proposition}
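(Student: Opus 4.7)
The plan is to use the injective reduction map to transport $\End_K(A)\otimes\qu$ inside the commutative algebra $\End_{k_\p}(A_\p)\otimes\qu$, show that this ambient ring is forced to be a number field of degree $2\dim(A)$, and then pin down $E$ as its (essentially unique) maximal totally real subfield.

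First, via the injective reduction map $r_\p$ we regard $\End_K(A)\otimes\qu$ as a $\qu$-subalgebra of $\End_{k_\p}(A_\p)\otimes\qu$. Proposition~\ref{prop:isotypical} and the decomposition \eqref{equation:decEndAp} give $\End_{k_\p}(A_\p)\otimes\qu\simeq M_n(D)$ for some $k_\p$-simple abelian variety $B$ with endomorphism division algebra $D$, $n\geq 1$. The commutativity hypothesis forces $n=1$ and $D$ itself commutative. Hence $D$ is a number field coinciding with its own center $\qu(\pi_\p)$, and I set $L=\qu(\pi_\p)=\End_{k_\p}(A_\p)\otimes\qu$. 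The index $s$ of $D$ is $1$, so formula \eqref{equation:dimformula} gives $[L:\qu]=2\dim(B)=2\dim(A)$, the last equality because $n=1$ and therefore $\dim(B)=\dim(A)$.

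Now let $F$ be any totally real subfield of $\End_K(A)\otimes\qu$ of degree $\dim(A)$; we must show $F=E$. Since $F\subseteq L$ and $[L:\qu]=2\dim(A)=2[F:\qu]$, the subfield $F$ has index two in $L$. In the generic case, where $\pi_\p$ is not a real Weil $q$-number, $L$ is a CM field (the involution $\pi_\p\mapsto q/\pi_\p$ is complex conjugation at every archimedean place). Its maximal totally real subfield $L_0$ has degree exactly $\dim(A)$, and every totally real subfield of $L$ is contained in $L_0$. Applying this to both $F$ and $E$ yields $F=L_0=E$, proving uniqueness in this case.

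The remaining case, $\pi_\p$ real, is degenerate and reduces to bookkeeping: then $L=\qu(\pi_\p)$ has degree at most two, so $\dim(A)\leq 1$, and the only totally real subfield of degree $\dim(A)$ is $\qu$ itself, which equals $E$. The real substance of the argument is therefore the first case, and the main observation driving it is that commutativity of $\End_{k_\p}(A_\p)$ forces the ambient endomorphism algebra to be a CM field of the correct degree $2\dim(A)$, inside which the totally real subfield of degree $\dim(A)$ is unique.
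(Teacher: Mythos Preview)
Your proof is correct and follows the same strategy as the paper: embed $\End_K(A)\otimes\qu$ into $\End_{k_\p}(A_\p)\otimes\qu$, identify the latter with the field $\qu(\pi_\p)$ of degree $2\dim(A)$, and recognise $E$ as its unique totally real subfield of index two (the paper names this subfield explicitly as $\qu(\pi_\p+q/\pi_\p)$, while you invoke the CM structure abstractly---these are the same thing). One cosmetic remark: your separate treatment of the case $\pi_\p$ real is in fact vacuous, since you have already shown that $D=\qu(\pi_\p)$ has index $s=1$, whereas a real Weil $q$-number always yields a quaternion $D$ with $s=2$; the commutativity hypothesis therefore excludes that case outright, as the paper observes directly.
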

\begin{proof} Let $E'\subseteq\End_{K}(A)\otimes\qu$ be a totally real number field with $[E':\qu]=\dim(A)$. We shall show that the image
of $E'$ in $\End_{k_\p}(A_\p)\otimes\qu$ under the reduction map (also denoted by $E'$) is equal to the number field $\qu(\pi_\p+q/\pi_\p)$,
which depends only on the reduction of $A$ modulo $\p$, and not on the choice of $E'$ inside $\End_K(A)\otimes\qu$.

The assumption on the place $\p$ is equivalent to ask that $A_\p$ be $k_\p$-simple, and that its endomorphism ring
tensored with $\qu$ be given by $\qu(\pi_p)$, for some non-real Weil $q$-number $\pi_\p$. Formula \eqref{equation:dimformula} from
Honda-Tate theory applied to the $k_\p$-simple variety $A_\p$ implies that
\[
\dim(A_\p)=[\qu(\pi_\p):\qu]/2 = [\qu(t_\p):\qu],
\]
where $t_\p=\pi_\p+q/\pi_\p$.

Arguing once again as in the proof of Proposition~\ref{prop:isotypical}, one can show that $E'\subseteq\End_{k_\p}(A_\p)\otimes\qu$ must contain $t_\p$.
Since $E'$ and $\qu(t_\p)$ have the same degree over $\qu$ they coincide, and the proposition follows. \qed
\end{proof}

\section{Quadratic orders}\label{sec:quadord}

In this section we clarify some aspects of the recipe given in \S~\ref{sec:mainresult} for the construction of the integral Frobenius
$\sigma_\p$ in the non-trivial case where $\pi_\p\not\in O_E$. 

Denote by $L$ the subalgebra $E[\pi_\p]\subseteq\End_{k_\p}(A_\p)\otimes\qu$ generated by $E$ and $\pi_\p$. In our notation for
$L$, for simplicity, we dropped any reference to the prime $\p$. Hopefully, this will not lead to any confusion.

Thanks to the assumption $\pi_\p\not\in O_E$, the polynomial $h_\p(x)$ has distinct roots (see Proposition~\ref{prop:charpoly}),
%the polynomial
%\[
%h_\p(x)=x^2-a_\p x + s_\p
%\]
%described in Proposition~\ref{prop:charpoly} has distinct roots and coincide with the minimal polynomial of $\pi_\p$ over $E$. Hence
and there is an isomorphism of $E$-algebras
\[
L\simeq E[x]/(h_\p(x)).
\]
Thus $L$ is either a quadratic field extension of $E$ or it is isomorphic to $E^2$, respectively according to whether $h_\p(x)$ is irreducible or not in $E[x]$.

In what follows by an \emph{$O_E$-order $S$ of $L$}, or simply an order of $L$, we shall mean a subring $S\subset L$ containing $O_E$ and defining an $O_E$-lattice
of $L$. Any such order $S$ is locally free of rank two over the localizations ${(O_E)}_\lambda$ of $O_E$ at each nonzero prime ideal $\lambda$.
There is a notion of \emph{$O_E$-discriminant} $\delta_S$ of an order $S\subset L$ (see \cite{Se2}, III \S 2). Without entering in the details here, we recall that
$\delta_S$ is an ideal of $O_E$ which, locally at any nonzero prime $\lambda$, is computed as the determinant of the usual bilinear
pairing given by the $E$-linear trace map
\[
(x, y)\mapsto\textrm{Tr}(xy).
\]
The $O_E$-discriminant of $O_E[\pi_\p]$ is generated by the discriminant $a_\p^2-4s_\p$ of $h_\p(x)$.

If $O_{L}$ denotes the integral closure of $O_E$ in $L$, we have a chain of inclusions of orders
\begin{equation}\label{eq:chain}
O_E[\pi_\p]\subseteq S_\p \subseteq O_{L},
\end{equation}
where
\[
S_\p = L\cap\End_{k_\p}(A_\p)
\]
is the order appearing in \S~\ref{sec:mainresult} in the definition of $\sigma_\p$. We observe that $O_{L}$ is the ring of integers of $L$ when $h_\p(x)$ is irreducible, and
it is isomorphic to $O_E^2$ otherwise.

Let now $S\subset L$ an order containing $\pi_\p$, and let $\mathfrak{b}_S\subseteq O_E$ the nonzero ideal given by
the annihilator of the torsion module $S/O_E[\pi_\p]$.

\begin{proposition}\label{prop:conduct} For any $O_E$-order $S\subset L$ containing $\pi_\p$ we have
\[
O_E[\pi_\p] = O_E + \mathfrak{b}_S S\;\;\text{ and }\;\;\delta_{O_E[\pi_\p]} = \delta_S\cdot\b_S^2
\]
The ideal $\mathfrak{b}_S$ will be called the {\rm $O_E$-conductor} of $O_E[\pi_\p]$ in $S$.
\end{proposition}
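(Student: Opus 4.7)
The plan is to reduce both identities to local statements at each nonzero prime $\lambda$ of $O_E$ and then patch. Since $O_E$ is a Dedekind domain and each side of either identity is an $O_E$-lattice in $L$ (for the first) or a nonzero $O_E$-ideal (for the second), equality can be checked after localization at every $\lambda$. So I fix such a $\lambda$ and write $R=(O_E)_\lambda$, a discrete valuation ring.

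First I would produce an $R$-basis of $S_\lambda := S\otimes_{O_E}R$ of the form $(1,\eta)$. The paper has already recorded that $S_\lambda$ is free of rank two over $R$; to split off $R\cdot 1$, observe that $S_\lambda/R$ is torsion-free: if $x\in S_\lambda$ and $\alpha x\in R$ for some nonzero $\alpha\in R$, then $x\in E$, and since $x$ is integral over $R$ and $R$ is integrally closed in $E$, one has $x\in R$. A finitely generated torsion-free module over a DVR is free, so the short exact sequence $0\to R\to S_\lambda\to S_\lambda/R\to 0$ splits, giving $S_\lambda = R\oplus R\eta$ for some $\eta$.

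The assumption $\pi_\p\in S$ then allows us to write $\pi_\p = u + b\eta$ for some $u,b\in R$, so
\[
R[\pi_\p] = R + R\pi_\p = R + Rb\eta,
\]
and consequently $S_\lambda/R[\pi_\p] \simeq R/(b)$, which (using that localization commutes with annihilator for finitely generated modules) identifies the localization of $\b_S$ with $(b)$. The first identity now reduces to the transparent equality
\[
R + \b_S S_\lambda = R + bR + bR\eta = R + bR\eta = R[\pi_\p].
\]
For the discriminant relation, the change-of-basis matrix carrying $(1,\eta)$ to $(1,\pi_\p) = (1,u + b\eta)$ has determinant $b$, so the standard transformation law for the trace form yields $\delta_{R[\pi_\p]} = b^2\,\delta_{S_\lambda}$, which is $\delta_{O_E[\pi_\p]} = \b_S^2\cdot\delta_S$ locally at $\lambda$.

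The main (and rather mild) obstacle is justifying the splitting $S_\lambda = R\oplus R\eta$, which ultimately rests on $R$ being integrally closed in $E$. Once this is in hand, the entire argument is bookkeeping with bases, and it is insensitive to whether $L$ is a field or $L\simeq E^2$, so no case distinction on the factorization of $h_\p(x)$ is needed.
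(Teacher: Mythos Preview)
Your proof is correct and follows exactly the approach the paper sketches: reduce to localizations at each nonzero prime $\lambda$ of $O_E$, where both $O_E[\pi_\p]$ and $S$ become free of rank two over the DVR $(O_E)_\lambda$, and verify the identities there. The paper leaves the local verification implicit; you have supplied the details (splitting off $R\cdot 1$ via torsion-freeness of $S_\lambda/R$, writing $\pi_\p=u+b\eta$, and reading off both the conductor and the discriminant from the change-of-basis determinant), but the strategy is the same.
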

\begin{proof} Both equalities of the proposition can be proved after localization at each nonzero prime ideal $\lambda\subset O_E$, where the statements
becomes easy to verify since the localizations ${(O_E[\pi_\p])}_\lambda$ and $S_\lambda$ are free of rank two over the discrete
valuation ring ${(O_E)}_\lambda$. \qed
\end{proof}

The $O_E$-conductor of the order $S_\p$ entered in the recipe of the integral Frobenius from \S~\ref{sec:mainresult}, where it was denoted by $\b_\p$.
If $S, S'\subset L$ are orders containing $\pi_\p$ then from Proposition~\ref{prop:conduct} we deduce that $S\subset S'$ \emph{if and only if} $\b_S | \b_{S'}$.
In particular, we have that $\b_S | \b_{O_{L}}$ for any $S$. The next proposition shows that the invariant $\b_S$ suffices to
determine the order $S$.

\begin{proposition}\label{prop:inje} The map $\psi$ sending an $O_E$-order $S\subset L$ containing $\pi_\p$ to the conductor $\mathfrak{b}_S$
gives a bijection
\[
\psi:\left\{\begin{gathered}
	\text{$O_E$-orders $S\subset L$}\\
	\text{containing $\pi_\p$}
\end{gathered}\right\}
\stackrel{\sim}{\longrightarrow}
\left\{\begin{gathered}
\text{ideals $\mathfrak{b}\subseteq O_E$}\\
\text{dividing $\mathfrak{b}_{O_{L}}$}.
\end{gathered}\right\}
\]
\end{proposition}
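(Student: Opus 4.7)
The plan is to construct an explicit inverse for $\psi$ and verify both composition identities. For an ideal $\mathfrak{b}\subseteq O_E$ dividing $\mathfrak{b}_{O_L}$, put $\mathfrak{c}=\mathfrak{b}_{O_L}\mathfrak{b}^{-1}$ (an integral ideal of $O_E$) and define
\[
\phi(\mathfrak{b}) := O_E + \mathfrak{c}\cdot O_L\subseteq L.
\]
The module $\phi(\mathfrak{b})$ is a subring because $\mathfrak{c}\,O_L\cdot\mathfrak{c}\,O_L=\mathfrak{c}^2\,O_L\subseteq\mathfrak{c}\,O_L$, contains $\pi_\p$ since Proposition~\ref{prop:conduct} applied to $O_L$ yields $\pi_\p\in O_E[\pi_\p]=O_E+\mathfrak{b}_{O_L}O_L=O_E+\mathfrak{b}\mathfrak{c}\,O_L\subseteq\phi(\mathfrak{b})$, and is an $O_E$-lattice because it sits between $O_E[\pi_\p]$ and $O_L$.

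To check $\psi\circ\phi=\mathrm{id}$, apply Proposition~\ref{prop:conduct} to $\phi(\mathfrak{b})$:
\[
O_E[\pi_\p]=O_E+\mathfrak{b}_{\phi(\mathfrak{b})}\,\phi(\mathfrak{b})=O_E+\mathfrak{b}_{\phi(\mathfrak{b})}\,\mathfrak{c}\,O_L.
\]
Comparing with $O_E[\pi_\p]=O_E+\mathfrak{b}_{O_L}O_L$ gives $\mathfrak{b}_{\phi(\mathfrak{b})}\,\mathfrak{c}=\mathfrak{b}_{O_L}$, hence $\mathfrak{b}_{\phi(\mathfrak{b})}=\mathfrak{b}$. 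The cancellation uses that an integral ideal $\mathfrak{a}\subseteq O_E$ is recoverable from the submodule $O_E+\mathfrak{a}\,O_L$, a short localization check.

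For $\phi\circ\psi=\mathrm{id}$ I work locally. Fix a prime $\lambda$ of $O_E$ and set $R=(O_E)_\lambda$. Using the structure theorem for modules over the DVR $R$ applied to the inclusion of free rank-two $R$-modules $O_E[\pi_\p]_\lambda\subseteq(O_L)_\lambda$, I choose a basis $(1,\omega)$ of $(O_L)_\lambda$ and an element $u\in R$ such that $\pi_\p=u+B\omega$, where $(B)=\mathfrak{b}_{O_L,\lambda}$. Any $O_E$-order with $O_E[\pi_\p]_\lambda\subseteq S_\lambda\subseteq(O_L)_\lambda$ then takes the form $S_\lambda=R+I\omega$ for a unique ideal $(B)\subseteq I\subseteq R$; closure under multiplication is automatic since $\omega$ is integral over $R$ and $I^2\subseteq I$. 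A direct computation gives $S_\lambda/O_E[\pi_\p]_\lambda\simeq I/(B)$, whose $R$-annihilator is $(B)I^{-1}$; thus $\mathfrak{b}_{S,\lambda}=(B)I^{-1}$, forcing $I=\mathfrak{c}_\lambda$ and $S_\lambda=\phi(\psi(S))_\lambda$. As this holds at every $\lambda$, $S=\phi(\psi(S))$. The main subtlety lies in producing the adapted local basis with $\pi_\p=u+B\omega$; once it is in hand, the structure of intermediate orders and their conductors is transparent and the bijection follows.
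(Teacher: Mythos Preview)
Your proof is correct. Both you and the paper construct the same explicit inverse $\phi(\mathfrak{b})=O_E+(\mathfrak{b}_{O_L}\mathfrak{b}^{-1})O_L$, so the surjectivity content coincides. The difference lies in how injectivity is established. The paper argues globally: it embeds $S/O_E[\pi_\p]$ into the second summand of
\[
L/O_E[\pi_\p]\simeq E/O_E\oplus(E/O_E)\cdot\bar\pi_\p
\]
and uses that $E/O_E\simeq\varinjlim O_E/\mathfrak{n}$ has a \emph{unique} submodule with any prescribed annihilator, so $S$ is recovered from $\mathfrak{b}_S$ without ever localizing. You instead localize at each prime $\lambda$, produce an adapted basis $(1,\omega)$ of $(O_L)_\lambda$ with $\pi_\p=u+B\omega$, and classify intermediate orders as $R+I\omega$ with $(B)\subseteq I\subseteq R$; the conductor computation $\mathfrak{b}_{S,\lambda}=(B)I^{-1}$ then forces $S_\lambda=\phi(\psi(S))_\lambda$.

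Your route is more hands-on and makes the local structure fully explicit, which in effect anticipates the content of Proposition~\ref{prop:basis}; the paper's route is shorter and basis-free. One small remark on the point you flag as the main subtlety: the basis $(1,\omega)$ does not drop out of the elementary-divisor form of the structure theorem directly. The cleanest justification is that $(O_L)_\lambda\cap E_\lambda=R$, so $(O_L)_\lambda/R$ is torsion-free of rank one over the DVR $R$, hence free; writing $\pi_\p=u+c\omega$ and computing $(O_L)_\lambda/O_E[\pi_\p]_\lambda\simeq R/(c)$ then gives $(c)=\mathfrak{b}_{O_L,\lambda}$ as you need.
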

\begin{proof} Let $S\subset L$ be an $O_E$-order containing $\pi_\p$.
%Since there is an inclusion $S/O_E[\pi_\p]\subseteq O_{L}/O_E[\pi_\p]$
%of torsion $O_E$-modules, the ideal $\mathfrak{b}_S$ divides $\mathfrak{b}_{O_{L}}$ and hence the map in the proposition is well defined.
Consider the short exact sequence of $O_E$-modules
\[
0\longrightarrow O_E[\pi_\p]\longrightarrow L\stackrel{r}{\longrightarrow}E/O_E\oplus (E/O_E)\cdot\bar\pi_\p\longrightarrow 0,
\]
where $\bar\pi_\p$ denotes the image of $\pi_\p$ in $L/E$. The quotient $S/O_E[\pi_\p]$ is a submodule of the right term of the sequence
which intersects the first summand trivially. Since there are isomorphisms of $O_E$-modules
\[
(E/O_E)\cdot\bar\pi_\p\simeq E/O_E\simeq\varinjlim_{0\neq \mathfrak{n}}O_E/\mathfrak{n},
\]
where the direct limit is taken over all nonzero ideals of $O_E$, we see that for any nonzero ideal $\mathfrak{b}\subset O_E$ there is a unique
submodule $M_\mathfrak{b}\subset (E/O_E)\cdot\bar\pi_\p$ whose annihilator is $\mathfrak{b}$. We conclude that
\[
S = r^{-1}(0\oplus M_{\mathfrak{b}_S}),
\]
and hence $S$ is uniquely determined by $\mathfrak{b}_S$ and $\psi$ is injective.

If $\mathfrak{b}\subseteq O_E$ is an ideal dividing $\mathfrak{b}_{O_{L}}$, then %it is not hard to see
\[
O_E+\dfrac{\mathfrak{b}_{O_{L}}}{\mathfrak{b}}O_{L}
\]
is an $O_E$-order of $L$ in which $O_E[\pi_\p]$ sits with conductor $\mathfrak{b}$. This shows that
$\psi$ is surjective and completes the proof of the proposition. \qed
\end{proof}

%The next lemma characterise the local evaluations of the $O_E$-conductor $\b_{O_{L}}$ of $O_E[\pi_\p]$ inside
%its integral closure $O_{L}$.
%

We assume for the rest of the section that $E$ has class number one.
%and prove a proposition useful to compute the
%$O_E$-conductor $\b_{O_{L}}$ of $O_E[\pi_\p]$ in $O_{L}$.
This assumptions, besides the principality of any ideal of $O_E$, ensures that any $O_E$-order $S\subset L$ is free of rank two as an
$O_E$-module (see \cite{Na}, Theorem 1.32).

\begin{proposition}\label{prop:basis} Assume that $E$ has class number one, and let $\b\subseteq O_E$ be a nonzero ideal. Then $\b$ divides $\b_{O_{L}}$ if and only
if there exists $u\in O_E$ such that the following conditions are satisfied:
\begin{enumerate}
\item\label{cond1} $h_\p'(u) = 2u-a_\p\in \b;$
\item\label{cond2} $h_\p(u) = u^2-a_\p u + s_\p \in \b^2.$
\end{enumerate}
Under these conditions, the reduction of $u$ modulo $\b$ is uniquely determined, and if $b$ is a generator of $\b$ the pair
\begin{equation}\label{basis}
(1, (\pi_\p-u)/b)
\end{equation}
is an $O_E$-basis of the order $S\subset L$ with $\b_S=\b$.
%Furthermore, if $\b$ is relatively prime to $2$, then the above conditions are both satisfied if and only if:
%\begin{enumerate}
%\item[1$'.$] $\b^2 | a_\p^2-4s_\p$.
%\end{enumerate}
\end{proposition}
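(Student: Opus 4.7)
The plan is to prove the biconditional in both directions, then handle uniqueness and the basis statement together.

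For the ($\Leftarrow$) direction, I would start by assuming $u \in O_E$ satisfies (1) and (2), fix a generator $b$ of $\b$, and set $\theta = (\pi_\p - u)/b \in L$. Plugging $\pi_\p = u + b\theta$ into $h_\p(\pi_\p)=0$ and expanding yields
\[
b^2\theta^2 + (2u-a_\p)b\,\theta + (u^2-a_\p u + s_\p) = 0,
\]
so the minimal polynomial of $\theta$ over $E$ is $\theta^2 + \frac{2u-a_\p}{b}\theta + \frac{u^2-a_\p u + s_\p}{b^2}$. Conditions (1) and (2) say precisely that these two coefficients lie in $O_E$, so $\theta$ is integral over $O_E$ and thus $\theta\in O_{L}$. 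Hence $S := O_E\cdot 1 + O_E\cdot\theta$ is an $O_E$-order in $L$ contained in $O_L$ and containing $\pi_\p = u + b\theta$. A direct calculation shows $O_E[\pi_\p] = O_E + O_E\cdot b\theta$, so $S/O_E[\pi_\p] \simeq O_E/(b) = O_E/\b$, giving $\b_S = \b$. Because $S\subseteq O_L$, the remark following Proposition~\ref{prop:conduct} yields $\b\mid\b_{O_L}$.

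For the ($\Rightarrow$) direction, assume $\b\mid\b_{O_L}$ and let $S$ be the unique order with $\b_S = \b$ supplied by Proposition~\ref{prop:inje}. Since $E$ has class number one, $S$ is free of rank two over $O_E$ (citing \cite{Na}, Theorem 1.32). Moreover, $S/O_E$ is torsion-free (any $x\in S$ with $ax\in O_E$ lies in $E\cap S = O_E$ by integrality and the fact that $O_E$ is integrally closed), hence free of rank one, so there exists $\eta\in S$ with $S = O_E\oplus O_E\eta$. Write $\pi_\p = u + b'\eta$ with $u,b'\in O_E$. The same computation as above identifies $S/O_E[\pi_\p]\simeq O_E/(b')$, so $(b') = \b_S = (b)$, i.e.\ $b'=bv$ for some unit $v\in O_E^\times$. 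Replacing $\eta$ by $v\eta$ preserves the basis property and yields $\pi_\p = u + b\theta$ with $\theta = v\eta\in S\subseteq O_L$. Running the minimal-polynomial computation backwards, integrality of $\theta$ forces (1) and (2).

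For uniqueness of $u$ modulo $\b$, suppose $u,u'\in O_E$ both satisfy (1) and (2). By the ($\Leftarrow$) construction, the orders $O_E + O_E(\pi_\p-u)/b$ and $O_E + O_E(\pi_\p-u')/b$ are both equal to the unique order $S$ with conductor $\b$ (Proposition~\ref{prop:inje}). Thus both $\theta_u$ and $\theta_{u'}$ lie in $S$, so their difference $(u'-u)/b$ lies in $E\cap S = O_E$, giving $u'-u\in\b$. The basis claim is immediate from the ($\Leftarrow$) construction: $\theta\notin E$ (else $\pi_\p\in E$, contradicting $\pi_\p\notin O_E$), so $1,\theta$ are $E$-linearly independent in the two-dimensional $E$-algebra $L$, and by construction they generate $S$ over $O_E$.

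The only mildly subtle step is the computation $S/O_E[\pi_\p]\simeq O_E/(b')$ in the ($\Rightarrow$) direction, which is where the class-number-one hypothesis is really used (via the existence of an $O_E$-basis of $S$ of the form $(1,\eta)$); everything else is bookkeeping once the minimal-polynomial identity for $\theta = (\pi_\p - u)/b$ is in hand, and the uniqueness argument is almost free given Proposition~\ref{prop:inje}.
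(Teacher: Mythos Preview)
Your proof is correct and follows essentially the same approach as the paper: both arguments hinge on the observation that conditions (1) and (2) are precisely the integrality of the minimal polynomial of $\theta=(\pi_\p-u)/b$ over $E$, and both identify the resulting order $O_E+O_E\theta$ as the one with conductor $\b$. Your treatment is more explicit than the paper's---in particular for the $(\Rightarrow)$ direction you directly invoke the PID property to produce a basis $(1,\eta)$ of $S$, whereas the paper simply refers back to the submodule description in the proof of Proposition~\ref{prop:inje}---but the underlying strategy is the same.
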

\begin{proof} Reasoning as in the proof of Proposition~\ref{prop:inje}, we see that the ideal $\b$ divides $\b_{O_{L}}$ if and only if there exists $u\in O_E$ such
that the ratio $(\pi_\p-u)/b$ belongs to $O_{L}$, where $b$ is a generator of $\b$. This is to say that $\b$ divides $\b_{O_{L}}$ if and only if the minimal, monic
polynomial of $(\pi_\p-u)/b$ over $E$ has coefficients in $O_E$. Since this polynomial is given by
\[
x^2+\frac{2u-a_\p}{b}x +\frac{u^2-a_\p u + s_\p}{b^2},
\]
the first part of the proposition follows. This also shows that the pair \eqref{basis} is a basis of the order corresponding to $\b_S$ under the bijection $\psi$
from Proposition~\ref{prop:inje}. From this it is easy to see that $u$ is uniquely determined modulo $\b$. The proposition follows. \qed
\end{proof}

%\begin{remark} If $E$ is not assumed to have class number one, the first part of Proposition~\ref{prop:basis} remains valid in the special case where $\b$ is a power of a nonzero
%prime ideal $\lambda\subseteq O_E$. This can be proved using a local version of the proof we just gave.
%\end{remark}

\begin{remark}\label{matrixsigmap}
The matrix $\sigma_\p$ constructed in \S~\ref{sec:mainresult} represents the multiplication action of $\pi_\p$ on $S_\p$ on the coordinates induced by
an $O_E$-basis of the form $(1, (\pi_\p-u_\p)/b_\p)$, where $b_\p$ is a generator of $\b_\p$ and $u_\p$ is an element of $O_E$ chosen to satisfy the
two congruences of Proposition~\ref{prop:basis}.
\end{remark}

We point out the following corollary of Proposition~\ref{prop:basis}.

\begin{corollary}\label{bodd} Let $\b\subseteq O_E$ an ideal which is relatively prime to $(2)$. Then $\b$ divides
$\b_{O_L}$ if and only if $\b^2$ divides the discriminant $a_\p^2-4s_\p$ .
\end{corollary}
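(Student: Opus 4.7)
My plan is to derive the corollary directly from Proposition~\ref{prop:basis} by means of the elementary identity
\[
(2u-a_\p)^2 - 4\bigl(u^2-a_\p u + s_\p\bigr) = a_\p^2 - 4 s_\p,
\]
which links the two congruence conditions appearing in that proposition to the discriminant of $h_\p(x)$.

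For the forward direction, assume $\b\mid\b_{O_L}$ and choose $u\in O_E$ as furnished by Proposition~\ref{prop:basis}. Then $2u-a_\p\in\b$ forces $(2u-a_\p)^2\in\b^2$, and by condition~(\ref{cond2}) also $4(u^2-a_\p u+s_\p)\in\b^2$. The identity above then places $a_\p^2-4s_\p$ in $\b^2$. Note that this half does not require coprimality of $\b$ with $(2)$.

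For the backward direction, the hypothesis $\b+(2)=O_E$ is essential. I would first observe that it propagates to $\b^2+(2)=O_E$: from a B\'ezout relation $b+2c=1$ with $b\in\b$, squaring gives $b^2\equiv 1\pmod{(2)}$ with $b^2\in\b^2$. Hence $2$, and therefore $4$, is a unit in $O_E/\b^2$. In particular $2$ is invertible modulo $\b$, so there exists $u\in O_E$ with $2u\equiv a_\p\pmod{\b}$, which verifies condition~(\ref{cond1}) of Proposition~\ref{prop:basis}. Consequently $(2u-a_\p)^2\in\b^2$, and combining with the standing assumption $a_\p^2-4s_\p\in\b^2$ the identity yields $4(u^2-a_\p u+s_\p)\in\b^2$. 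Invertibility of $4$ modulo $\b^2$ then gives $u^2-a_\p u+s_\p\in\b^2$, i.e.\ condition~(\ref{cond2}). Proposition~\ref{prop:basis} now delivers $\b\mid\b_{O_L}$.

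The only genuine subtlety is verifying that coprimality to $(2)$ persists from $\b$ to $\b^2$, so that dividing through by $4$ in $O_E/\b^2$ is legitimate; once this is in hand the proof is purely formal manipulation with the discriminant identity.
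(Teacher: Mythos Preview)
Your proof is correct and follows essentially the same route as the paper's: both arguments hinge on the identity $4h_\p(u)=(2u-a_\p)^2-(a_\p^2-4s_\p)$, solve $2u\equiv a_\p\pmod{\b}$ using coprimality with $(2)$, and then divide out the factor $4$ modulo $\b^2$. The only cosmetic difference is that for the forward implication the paper appeals directly to the discriminant--conductor relation $\delta_{O_E[\pi_\p]}=\delta_{O_L}\cdot\b_{O_L}^2$ of Proposition~\ref{prop:conduct}, whereas you run the identity in reverse via Proposition~\ref{prop:basis}; both are immediate.
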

\begin{proof}
The ``only if'' part is clear from the relationship between discriminant and conductor. To see the if part, assume that $\b^2$ divides $(a^2_\p-4s_\p)$ and
let $u\in O_E$ be an element such that the first condition of the proposition is satisfied, i.e., 
\[
2u \equiv a_\p\text{ mod }\b.
\]
Such a $u$ exists since $\b$ and $(2)$ are relatively prime. Then
\[
4h_\p(u) = {(2u-a_\p)}^2 - (a_\p^2 - 4s_\p)
\]
is divisible by $\b^2$, since so are both summand. Since $\b$ and $(2)$ are relatively prime, we conclude that
$\b^2$ divides $h_\p(u)$ and the second condition of the proposition is also satisfied. Thus $\b$ divides $\b_{O_L}$.
\qed
\end{proof}

\bigskip 
We conclude the section with an observation that will be useful in our computations. Choose a generator $b_{O_L}$ of $\b_{O_{L}}$ and an element $u_\p$ such that the pair
\[
(1, \frac{\pi_\p-u_\p}{b_{O_L}})
\]
is an $O_E$-basis of $O_{L}$, and set $\mathbf{e}_2 = (\pi_\p-u_\p)/b_{O_L}$. From Propositions~\ref{prop:inje} and \ref{prop:basis} we deduce the following
corollary.
\begin{corollary}\label{cor:basis} Let $S\subset L$ an $O_E$-order containing $\pi_\p$, let $\b_S$ the $O_E$-conductor of $O_E[\pi_\p]$ in $S$, and let $b_S$ a generator of $\b_S$.
The pair
\begin{equation}\label{eq:basis}
(1, \frac{b_{O_L}}{b_S}\cdot\mathbf{e}_2) = (1,\frac{\pi_\p-u_\p}{b_S})
\end{equation}
is an $O_E$-basis of $S$.
\end{corollary}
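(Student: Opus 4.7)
The plan is to reduce Corollary~\ref{cor:basis} directly to Propositions~\ref{prop:inje} and \ref{prop:basis}. The key observation is that the element $u_\p \in O_E$ which works for the maximal order $O_L$ automatically satisfies the congruences of Proposition~\ref{prop:basis} for every suborder $S \subset O_L$ containing $\pi_\p$.

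First I would apply Proposition~\ref{prop:basis} with $\b = \b_{O_L}$ and generator $b = b_{O_L}$: since $(1, (\pi_\p - u_\p)/b_{O_L})$ is an $O_E$-basis of $O_L$ by hypothesis, the proposition yields
\[
2u_\p - a_\p \in \b_{O_L} \quad \text{and} \quad u_\p^2 - a_\p u_\p + s_\p \in \b_{O_L}^2.
\]
Because $S \subseteq O_L$, the divisibility criterion noted after Proposition~\ref{prop:conduct} gives $\b_S \mid \b_{O_L}$, hence $\b_{O_L} \subseteq \b_S$ and $\b_{O_L}^2 \subseteq \b_S^2$. The containments above therefore upgrade to
\[
2u_\p - a_\p \in \b_S \quad \text{and} \quad u_\p^2 - a_\p u_\p + s_\p \in \b_S^2,
\]
which are exactly the two conditions required by Proposition~\ref{prop:basis} with $\b = \b_S$, generator $b = b_S$, and witness $u = u_\p$.

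Applying that proposition in the reverse direction, $(1, (\pi_\p - u_\p)/b_S)$ is an $O_E$-basis of \emph{some} order $S' \subset L$ containing $\pi_\p$ with conductor $\b_{S'} = \b_S$. The injectivity of the bijection $\psi$ from Proposition~\ref{prop:inje} then forces $S' = S$, proving the first presentation of the basis. The identity with $(1, (b_{O_L}/b_S)\cdot\mathbf{e}_2)$ is the immediate substitution of $\mathbf{e}_2 = (\pi_\p - u_\p)/b_{O_L}$ into the ratio $b_{O_L}/b_S$.

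There is no genuine obstacle in the argument; the only subtlety worth flagging is that Proposition~\ref{prop:basis} pins down the valid witnesses $u$ only modulo the ideal under consideration, so a priori one might worry whether the specific $u_\p$ produced for $O_L$ remains a legitimate choice for the smaller ideal $\b_S$. The monotonicity of both the linear and quadratic congruences under divisibility of the ideals, exploited in the step above, is exactly what resolves this concern and allows a single $u_\p$ to serve for $O_L$ and for every suborder $S$ simultaneously.
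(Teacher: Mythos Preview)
Your proposal is correct and follows exactly the route the paper indicates: the paper offers no explicit proof beyond stating that the corollary is deduced from Propositions~\ref{prop:inje} and \ref{prop:basis}, and your argument is precisely the natural elaboration of that deduction---verifying that the witness $u_\p$ for $O_L$ still satisfies the congruences of Proposition~\ref{prop:basis} for the divisor $\b_S$, and then invoking the uniqueness from Proposition~\ref{prop:inje}.
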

As $b_S$ varies through the divisors of $b_{O_L}$, formula \eqref{eq:basis} parametrizes all $O_E$-orders $S\subset L$ containing $\pi_\p$, by
exhibiting $O_E$-basis of them.

\section{Proof of the main result}\label{sec:proof}

We first prove an abstract lemma that will be the key to our proof of Theorem~\ref{theorem:main}.
Let $R$ be a ring isomorphic to a finite product $\prod R_i$ of discrete valuation rings $R_i$, with total ring of
fractions $M$. Consider the free module $R^2$ of rank two, and assume that we are given an $R$-linear map $F:R^2\to R^2$
such that the $R$-subring $R[F]\subset\End_{R}(R^2)$ generated by $F$ is free of rank two as an $R$-module.

The map $F$ is given by a collection of $R_i$-linear maps $F_i:R_i^2\to R_i^2$, and the above requirement is equivalent to ask
that $F_i$ is not given by multiplication by an element of $R_i$, for every index $i$.
The ring
\[
S=\End_{R[F]}(R^2)
\]
of $R$-linear endomorphisms of $R^2$ commuting with $F$ is an order of $R[F]\otimes_R M$ containing $R[F]$ and which acts on
$R^2$ in the obvious way.

\begin{lemma}\label{lemma:key} $R^2$ is a free $S$-module of rank one.
\end{lemma}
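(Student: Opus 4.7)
The first step is to reduce to the case where $R$ is a DVR. The product decomposition $R = \prod R_i$ splits $R^2$, $F$, and $S$ componentwise, and being free of rank one is a componentwise condition, so it suffices to treat each factor separately. Assume from now on that $R$ is a DVR, with uniformizer $\pi$, fraction field $M$, and residue field $k$. I would first check that $S$ is itself a free $R$-module of rank two: it is torsion-free (as a submodule of $M_2(R)$), hence free, and because $F$ is not a scalar matrix over $R$ (else $R[F] = R$ would have rank one), its minimal polynomial over $M$ has degree exactly two, so the commutant $S \otimes_R M$ of $F$ in $M_2(M)$ equals $M[F]$, which is two-dimensional. Write $S = R \oplus R\tau$ for some $\tau \in S$.

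The main step is the claim that the lemma follows as soon as one finds a vector $v \in R^2$ such that $(v, \tau v)$ is an $R$-basis of $R^2$: if so, then $S v = R v + R \tau v = R^2$, and a relation $(\alpha + \beta\tau)v = 0$ with $\alpha, \beta \in R$ forces $\alpha = \beta = 0$ by $R$-linear independence of $v$ and $\tau v$, so the $S$-linear map $S \to R^2$ sending $s \mapsto s v$ is an isomorphism. Choosing $R$-coordinates on $R^2$, the determinant $q(v) := \det[v \mid \tau v]$ is a homogeneous quadratic form in the two coordinates of $v$, and what is needed is $v \in R^2$ with $q(v) \in R^\times$. This is equivalent to the reduction $\bar q \colon k^2 \to k$ not being identically zero as a function on $k^2$, since any nonzero value of $\bar q$ lifts to a unit value of $q$, and a quadratic form in two variables over any field is identically zero as a function only if all its coefficients vanish.

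The main obstacle is to rule out the possibility that $\bar q \equiv 0$ on $k^2$. If it were so, then for every $\bar v \in k^2$ the vectors $\bar v$ and $\bar\tau \bar v$ would be $k$-linearly dependent, which forces $\bar\tau \in M_2(k)$ to be a scalar matrix $\bar r \cdot I$. Lifting $\bar r$ to some $r \in R$, every entry of $\tau - r I$ lies in $\pi R$, so $\tau' := (\tau - rI)/\pi \in M_2(R)$; since $\tau'$ commutes with $F$, it lies in $S = R \oplus R\tau$, say $\tau' = \alpha + \beta\tau$ with $\alpha, \beta \in R$. Substituting back into $\tau = rI + \pi \tau'$ yields $\tau(1 - \pi \beta) = r + \pi \alpha \in R$, and since $1 - \pi \beta \in R^\times$ this gives $\tau \in R$, contradicting the $R$-freeness of rank two of $R \oplus R\tau$. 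This rules out $\bar q \equiv 0$, produces the desired $v$, and completes the proof.
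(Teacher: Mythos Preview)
Your proof is correct and follows essentially the same route as the paper's: reduce to the DVR case, write $S=R\oplus R\tau$, show by contradiction that the reduction $\bar\tau$ cannot be a scalar matrix (otherwise $(\tau-r)/\pi$ would lie in $S$ and force $\tau\in R$), and then use any $v$ with $(v,\tau v)$ an $R$-basis to exhibit $s\mapsto sv$ as the desired isomorphism. Your quadratic-form packaging of the step ``$\bar\tau$ not scalar $\Rightarrow$ some $\bar v,\bar\tau\bar v$ are independent'' and your explicit verification that $S$ is free of rank two are minor expository additions, but the underlying argument is the same as the paper's.
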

\begin{proof} The ring $S$ decomposes as the product $\prod S_i$, where $S_i=\End_{R_i[F_i]}(R_i^2)$. Therefore the general form
of the lemma follows from the special case where $R$ is a discrete valuation ring, which we treat next. Denote by $\mathfrak{m}$ the maximal ideal
of $R$, choose a uniformizer $\omega$, and let $k$ be the residue field.

The $R$-order $S$ of the $M$-algebra $R[F]\otimes_R M$ is free of rank two over $R$, and therefore
\[
S=R\oplus R\cdot F_0,
\]
for some $F_0\in S$ which does not belong to the subring $R\subset S$. We claim that the morphism
\[
F_0\text{ mod }\mathfrak{m}:R^2/\mathfrak{m}R^2\longrightarrow R^2/\mathfrak{m}R^2
\]
is not given by multiplication by any element of $k$. For otherwise there exists $\lambda\in R$ such that
$F_0-\lambda$ sends $R^2$ to $\mathfrak{m}R^2$. This implies that $(F_0-\lambda)/\omega\in S$, which contradicts
the fact that $(1, F_0)$ is an $R$-basis of $S$.

The claim says precisely that there exists $r\in R^2\setminus\mathfrak{m}R^2$ such that
\[
F_0(r)\not\in R\cdot r + \mathfrak{m}R^2.
\]
From Nakayama's Lemma we deduce that $(r, F_0(r))$ is an $R$-basis of $R^2$, since the reductions of its elements generate
$R^2/\mathfrak{m}R^2$. From this it readily follows that the map
\[
S\ni s\mapsto s(r)\in R^2
\]
is an isomorphism of $S$-modules. This completes the proof of the lemma. \qed
\end{proof}

%We are now ready to show that the pair $(T(\p), F(\p))$ constructed in \S~\ref{sec:recipe} is an integral Frobenius for $A$
%at $\p$.

We now give the proof of Theorem~\ref{theorem:main}, the main result of the paper.

\begin{proof} %[of Theorem~\ref{theorem:main}]
%Let $\ell$ be a prime number $\neq p$, we have to show that there exists an
%$O_E\otimes\z_\ell$-linear isomorphism $T(\p)\otimes\z_\ell\simeq T_\ell(A_\p)$ for which the operator $F(\p)\otimes\z_\ell$ corresponds to $T_\ell(\pi_\p)$,
%the map induced by the Frobenius isogeny. The theorem will then follow since $T_\ell(\pi_\p)$ coincide with the action of $\fr_\p\in G_K$
%on $T_\ell(A)$, once the natural identification $T_\ell(A_\p)\simeq T_\ell(A)$ is made.
%
%If $\pi_\p$ belongs to $E$, then for any prime $\ell\neq p$ the arithmetic Frobenius $\rho_\ell(\fr_\p)$ acts on $T_\ell(A)$ as multiplication by
%$\pi_\p$. The theorem follows easily in this case and we continue the proof assuming $\pi_\p\not\in E$.
%
%Since $O_E$ is a Dedekind domain, the $O_E$-module underlying $S_\p$ is isomorphic to $O_E\oplus I$, for some nonzero ideal
%$I\subset O_E$ (see \cite{Na}, Theorem 1.32). From this we deduce that $T(\p)$ is locally free and hence projective.
%%Notice that $T(\p)$ is free if $O_F$ has class number one.
The result is trivial if $\pi_\p\in E$, therefore we continue assuming $\pi_\p\not\in E$.
Let $\ell$ be a prime different from $p$, the residual characteristic of $\p$.
By a well known result of Tate (see \cite{Ta}), there is a natural isomorphism
\[
r^0_\ell:\End^0_{k_\p}(A_\p)\otimes\qu_\ell\stackrel{\sim}{\longrightarrow}\End_{\qu_\ell[\pi_\p]}(V_\ell(A)).
\]
Since $\pi_\p\not\in E$, the subalgebra $L=E[\pi_\p]\subseteq \End^0_{k_\p}(A_\p)$ is a maximal commutative semi-simple subring, and
hence it coincides with its own commutator. This implies that the restriction
of $r^0_\ell$ to $L\otimes\qu_\ell$ induces an isomorphism
\begin{equation}\label{eqisomtate1}
s^0_\ell:L\otimes\qu_\ell\stackrel{\sim}{\longrightarrow}\End_{L\otimes\qu_\ell}(V_\ell(A_\p).
\end{equation}
Now, the integral version of $r^0_\ell$, which is given by
\begin{equation}\label{eqisomtate2}
r_\ell:\End_{k_\p}(A_\p)\otimes\z_\ell\stackrel{\sim}{\longrightarrow}\End_{\z_\ell[\pi_\p]}(T_\ell(A)),
\end{equation}
is also an isomorphism. From \eqref{eqisomtate1} and \eqref{eqisomtate2} we conclude that the map
\[
s_\ell:S_\p\otimes\z_\ell\longrightarrow\End_{S_\p\otimes\z_\ell}(T_\ell(A_\p))
\]
arising as the restriction of $r_\ell$ to $S_\p\otimes\z_\ell$ is also an isomorphism. Since $T_\ell(A_\p)$ is free of rank two over
$O_E\otimes\z_\ell$ and $\pi_\p\not\in O_E$, Lemma~\ref{lemma:key} gives that $T_\ell(A_\p)$ is a free $S_\p\otimes\z_\ell$-module
of rank one\footnote{More generally, this freeness holds if $S_\p\otimes\z_\ell$ is a Gorenstein ring (see \cite{ST}, Remark p. $502$).},
and hence
\begin{equation}
\text{$T_\lambda(A_\p)$ is a free $S_\p\otimes_{O_E} O_\lambda$-module of rank one.}
\end{equation}
Theorem~\ref{theorem:main} now follows from the fact that the matrix $\sigma_\p$ describes, by construction, the multiplication action of $\pi_\p$ on $S_\p$ in a suitable basis. \qed
\end{proof}

\section{Computations}\label{sec:comp}

Our aim in the remaining part of the paper is to explain how two algorithms already present in the literature (see \cite{GGG} and \cite{Bis}) can be used to
compute the integral Frobenius at several primes of good reduction for certain modular abelian \emph{surfaces} over $\qu$. We are grateful to the authors of these
algorithms for providing us with the nice opportunity to make experimental tests. All our auxiliary computations, like those in \cite{GGG} and \cite{Bis}, have been
performed using Magma (see \cite{Mg}).

\subsection{The main algorithms}
The first algorithm is the result of joint work of Gonz\'alez-Jimen\'ez, Gonz\'alez and Gu\`ardia (see \cite{GGG}). The input from which they start
is a cuspidal, normalized eigenform $f=\sum a_nq^n\in S_2(\Gamma_0(N))$ of weight $2$, trivial nebentype and conductor $N$ such that its Fourier
coefficient field $E_f$ is a (real) quadratic extension of $\qu$. The modular abelian surface $A_f$ attached to $f$ via the classical Shimura construction (see \cite{Sh}) is
a $\qu$-subvariety of the Jacobian $\textrm{Jac}X_0(N)$ of the modular curve $X_0(N)$, and has good reduction away from $N$. The Hecke action induces an inclusion
\begin{equation}\label{hecke}
O_f\subseteq\End_\qu(A_f),
\end{equation}
where $O_f=\z[{(a_p)}_{p\nmid N}]$ is the order of $E_f$ generated by the Fourier coefficients of $f$ indexed by primes not dividing $N$.

Assuming that the canonical polarization on $A_f$ coming from that of the Jacobian of the modular curve $X_0(N)$ is a power of a principal one, the three authors
compute a hyperelliptic, genus two equation
\[
y^2 = F(x),
\]
where $F(x)\in\z[x]$ has degree $5$ or $6$, whose desingularization defines a curve $C_f$ over $\qu$ such that there is an isomorphism
\begin{equation}\label{isom:jac}
\textrm{Jac}(C_f) \simeq A_f 
\end{equation}
of principally polarized abelian varieties over $\qu$. In table at the end of their paper they list the hyperelliptic equations that they obtained for the $78$
modular abelian surfaces of conductor $\leq 500$ whose canonical polarization satisfies the required condition. We remark that their output, and hence also ours,
is correct only up to numerical approximation. However, several tests in favour of its correctness are performed by the authors.

Notice that if \eqref{hecke} extends to the whole ring of integers $O_{E_f}\subset E_f$, then $A_f$ is an abelian surface with real multiplication by $E_f$, according
to the definition we gave in \S~\ref{sec:intro}. Furthermore, if $E_f$ has class number one then it makes sense to try and compute the integral
Frobenius of $A_f$ at primes $p\nmid N$.

\bigskip

The second algorithm on which our computations depend is due to Bisson (see \cite{Bis}). The input is a smooth genus two curve $C$ over a finite
field $\f$ with $q$ elements such that its Jacobian $\textrm{Jac}(C)$ is an absolutely simple, ordinary abelian surface over $\f$. The curve is assumed
to be represented by a hyperelliptic equation $y^2=\bar F (x)$, for a suitable polynomial $\bar F(x)\in\f[x]$ of degree $5$ or $6$.
Under these assumptions, the algorithm returns the endomorphism ring of the principally polarized abelian surface given by $\textrm{Jac}(C)$, which
is an order of the quartic number field $\qu(\pi)$ generated by the Frobenius isogeny $\pi$ of $\textrm{Jac}(C)$ relative to $\f$.

\subsection{Synthesis of the algorithms} The strategy we suggest for computing the integral Frobenius at primes of good reduction
for a modular abelian surface $A_f$ over $\qu$ consists of the following steps.

\begin{enumerate}
\item\label{step1} Start from a cuspidal, normalized eigenform $f\in S_2(\Gamma_0(N))$ whose coefficient field $E_f$ is quadratic. The first goal is to use \cite{GGG} to
find a hyperelliptic equation of a genus two curve $C_f$ over $\qu$ such that the isomorphism \eqref{isom:jac} holds.
\end{enumerate}

\noindent There are $465$ modular surfaces of conductor $\leq 500$. In the $75$ cases where the canonical polarization is a power of a principal one, \cite{GGG}
provides the hyperelliptic equations of the corresponding curves $C_f$. In the remaining cases, one can still try to use the same algorithm to solve \eqref{isom:jac} in $C_f$
by constructing a principal polarization on $A_f$. In \cite{GGR} this problem in carefully analized and sufficient conditions for the existence of $C_f$ are given.

\smallskip

\noindent We continue assuming that step \ref{step1} was successful, and perform now two checks. 

\begin{enumerate}[resume]
\item Check that the inclusion \eqref{hecke} extends to the ring of integers $O_{E_f}$.
\end{enumerate}

\noindent This maximality condition is often satisfied in practice.
With the help of Magma we verified that for $428$ modular surfaces of conductor $\leq 500$ the order
$O_f$ is already the maximal order of $E_f$. Moreover, using \cite{Bis}, we verified that for only two of the surfaces $A_f$ considered in \cite{GGG} the ring
$\End_{\qu}(A_f)$ fails to be the maximal order. These surfaces are those with conductor $224$, where $\End_{\qu}(A_f)$ sits in
$O_{E_f}$ with index two.

\begin{enumerate}[resume]
\item Check that the class number of $E_f$ is one.
\end{enumerate}

\noindent This condition is required by our method for constructing integral Frobenia. Among surfaces of conductor $\leq 500$ the condition fails only once in
conductor $276$.

\bigskip

\noindent Assuming that the three steps above are successfully completed, we enter now the second part of the strategy. Let $p$ be a prime $\nmid N$, denote by
$A_{f,p}$ the reduction of $A_f$ at $p$, and by $\pi_p$ the Frobenius isogeny of $A_{f,p}$ relative to its base field $\f_p$.
%Following the recipe of Theorem~\ref{theorem:main}
%we now attempt the computation of the matrix $\sigma_p$ describing an integral Frobenius of $A_f$ at $p$.
By the Eichler-Shimura relation, we have
\[
\pi_p+p/\pi_p = a_p \in O_{E_f},
\]
where $a_p$ is the $p$-th Hecke eigenvalue of $f$, and hence the characteristic polynomial of $\sigma_p$ is given by
\begin{equation}\label{equation:polyh}
h_p(x) = x^2 - a_p x + p.
\end{equation}
If $a_p^2-4p=0$, then
\[
\pi_p\in O_{E_f}\subseteq\End_{\f_p}(A_{f,p}),
\]
and the integral Frobenius $\sigma_p$ is the scalar matrix given by multiplication by $\pi_p$. We remark that in the computation we performed we
never run in such an example.

\noindent We therefore continue assuming $a_p^2-4p\neq 0$, which also implies that $\pi_p$ is not a real Weil $p$-number, for otherwise we
would have $h_p(x)=x^2-p$ (see \S~\ref{sec:reduction}, Remark \ref{remark:onhp}), a contradiction to \eqref{equation:polyh}.

\begin{enumerate}[resume]
\item\label{step4} Consider the quadratic $E_f$-algebra
\[
L=E_f[\pi_p]\subseteq\End_{\f_p}(A_{f,p})\otimes\qu,
\]
and compute the ideal $\b_{O_L}$ given by the $O_{E_f}$-conductor of $O_{E_f}[\pi_p]$ in its integral closure $O_L\subset L$. Compute further
a generator $b_{O_L}$ of $\b_{O_L}$ and an element $u_p\in O_{E_f}$ such that the element
\[
\mathbf{e}_2 = \frac{\pi_p-u_p}{b_{O_L}}
\]
completes $1\in O_{E_f}$ to an $O_{E_f}$-basis of $O_L$.
\end{enumerate}

\noindent Using Propositions~\ref{prop:conduct}, \ref{prop:basis} and Corollary~\ref{bodd}, the required computation can be carried out
using basic Magma functions on the arithmetic of real quadratic fields. Notice that the $O_{E_f}$-basis $(1, \mathbf{e}_2)$ of $O_L$
satisfies the useful property of Corollary~\ref{cor:basis}.

%If $b_S$ is a divisor of $b_{O_L}$, then the pair
%\[
%(1, \frac{b_{O_L}}{b_S}\cdot\mathbf{e}_2)
%\]
%is an $O_{E_f}$-basis of the order $S$ in which $O_{E_f}[\pi_p]$ sits with conductor $(b_S)$.

\smallskip

\noindent The crucial information that we need to compute for the recipe of the integral Frobenius is the ideal $\b_p$ given by the $O_{E_f}$-conductor of
$O_{E_f}[\pi_p]$ inside $S_p$, where $S_p$ is the order $L\cap\End_{\f_p}(A_{f,p})$. If the conductor $\b_{O_L}$ is the
trivial ideal $O_{E_f}$, then the chain \eqref{eq:chain} becomes
\[
O_{E_f}[\pi_p] = S_p = O_L,
\]
and hence the ideal $\b_p$ is trivial, and the integral Frobenius is simply given by
the companion matrix
\begin{equation}\label{eq:compmatrix}
\sigma_p=\begin{pmatrix}
0 & -p\\
1 & a_p\\
\end{pmatrix}.
\end{equation}
We then continue assuming that the ideal $\b_{O_L}$ is a proper ideal of $O_E$. In this case there is more than one possibility for the order
$S_p$, and to decide which one occurs we want to use Bisson's algorithm to compute the ring $\End_{\f_p}(A_{f,p})$.
In order to do so we first have to make sure that the assumptions of his algorithm are satisfied. We discuss these in the next three steps. If one
of these assumptions fails, then our strategy will not lead to the computations of the integral Frobenius of $A_f$ at $p$.

\begin{enumerate}[resume]
\item In the case where $a_p^2-4p\neq 0$ and the ideal $\b_{O_L}$ is proper, check whether the affine model $\z[x,y]/(y^2-F(x))$ of $C_f$ has good reduction
at the prime $p$.
\end{enumerate}

\noindent It can happen that the model of $C_f$ coming from the algorithm in \cite{GGG} has singular reduction at a prime $p\nmid N$. In our computations
this never occurred in a case where $a_p^2-4p\neq 0$ and $\b_{O_L}\subsetneq O_{E_f}$. 

\begin{enumerate}[resume]
\item In the case where $a_p^2-4p\neq 0$ and the ideal $\b_{O_L}$ is proper, check whether the abelian surface $A_{f,p}$ is ordinary.
\end{enumerate}

\noindent Recall that a Weil $p$-number $\pi$ is ordinary if and only if the algebraic integer $\pi+p/\pi$ is a $p$-adic unit. In our case this amounts
to check if $a_p$ is relatively prime to $p$ in $O_{E_f}$, which can easily be done in Magma.

\begin{enumerate}[resume]
\item In the case where $a_p^2-4p\neq 0$ and the ideal $\b_{O_L}$ is proper, check whether the abelian surface $A_{f,p}$ is absolutely irreducible.
\end{enumerate}

\noindent The abelian surface $A_{f,p}$ is either $\f_p$-isogenous to the square of an elliptic curve or it is $\f_p$-simple (see Proposition~\ref{prop:isotypical}).
Since our base field is $\f_p$ and the Weil number $\pi_p$ is not real, we know from Honda-Tate theory (see \cite{T})
that
\[
\text{$A_{f,p}$ is $\f_p$-simple $\iff$ $\pi_p+p/\pi_p=a_p\not\in\z$ $\iff$ $[\qu(\pi):\qu]=4$}.
\]
In order to proceed we then must require $a_p\not\in\z$ and still have to check whether $A_{f,p}$ is absolutely simple or not. This amount to verify the equality of number fields
\begin{equation}\label{equation:equality}
\qu(\pi_p^N) = \qu(\pi)
\end{equation}
for any integer $N\geq 2$. Since $\qu(\pi)$ is a CM quartic extension of $\qu$ it suffices to check \eqref{equation:equality}
for all integers $N\geq 2$ such that $\varphi(N)\leq 4$, where $\varphi$ denotes the Euler $\varphi$-function. These values are $2, 3, 4, 5, 6, 10$ and $12$,
and \eqref{equation:equality} can be verified using Magma.

\begin{enumerate}[resume]
\item In the case where $a_p^2-4p\neq 0$ and the ideal $\b_{O_L}$ is proper, assuming that the affine model of $C_f$ has good reduction at $p$ and the
abelian surface $A_{f,p}$ is absolutely irreducible and ordinary, use \cite{Bis} to compute $\End_{\f_p}(A_{f,p})$. Then extract from it the information of the ideal $\b_p$.
\end{enumerate}

\noindent Since $A_{f,p}$ is $\f_p$-simple and $\pi_p$ is not real, we have that the CM quartic field $\qu(\pi_p)$ coincides with the algebra $\End_{\f_p}(A_{f,p})\otimes\qu$.
So that the ring of $\f_p$-endomorphisms of $A_{f,p}$ is an order of $\qu(\pi_p)$. Notice that we also have
\[
L = \qu(\pi)\text{ and }S_p=\End_{\f_p}(A_{f,p}).
\]
%using the notation of Step~\ref{step4}.
The output of Bisson's algorithm is a $\z$-basis of the $\z$-order $\End_{\f_p}(A_{f,p})\subset\qu(\pi_p)$, expressed in terms of the basis
$(1, \pi, \pi^2, \pi^3)$ of $\qu(\pi_p)$. We are left with converting this output in an ``$O_{E_f}$-linear'' format, suitable for our purposes. To do this we use the equality
$\pi_p+p/\pi_p=a_p$ to embed the totally real field $E_f$ in the number field $\qu(\pi)$. In this way, using step \ref{step4} and Corollary~\ref{cor:basis},
we can control all $O_{E_f}$-orders $S$ containing $O_{E_f}[\pi_p]$ by exhibiting for each of them an $O_{E_f}$-basis of the form
\begin{equation}\label{equation:final}
(1, \frac{b_{O_L}}{b_S}\cdot\mathbf{e}_2)
\end{equation}
inside the number field $\qu(\pi)$ where Bisson's output lives. Letting $b_S$ vary through a set of generators of all divisors of $\b_{O_L}$, we
can then easily determine the unique element for which the $O_{E_f}$-span of the pair \eqref{equation:final} gives the lattice from Bisson's algorithm.
This element is the generator $b_p$ of the ideal $\b_p$ we were after.

%\begin{enumerate}
%
%
%
%
%
%and has class number one;
%
%
%\item[2)] Assume that the polarization on the surface $A_f$ is principal;
%\item[2)] Check that the injection \eqref{hecke} extends to $O_{E_f}$;
%\item[iii)] the class number of $E_f$ is one.
%\item[2)]
%\end{enumerate}

\section{Tables of results}\label{sec:tab}

We applied the strategy explained in the previous section to three modular abelian surfaces $A_f$ over $\qu$. The goal is to compute as many integral Frobenia as possible
at primes $p$ of good reduction in the range $2,\ldots, 1997$. The three Hecke cuspidal newforms $f$ we chose have conductor $N=23, 125$ and $133$. They all
lie in the first Galois orbit of the corresponding space $S_2(\Gamma_0(N))$, according to Magma enumeration. The hyperelliptic equations we used for the genus two curves
$C_f$ are those appearing in \cite{GGG}.

%
%Since the
%rational prime $2$ is inert in $\qu(\sqrt 5)$, we have that, for each $f$, the Galois representation coming from the $2$-torsion of $A_f$
%\[
%\bar\rho_{(2)}:\gq\longrightarrow\aut_{O_{E_f}}(A_f[2])\simeq\gl_2(\f_4)
%\]
%

The heading of the six columns of each table follows the notations of the paper. The first column consists of primes $p$. The second, third and fourth columns
($a_p$, $u_p$ and $b_p$) contain the elements of $O_{E_f}$ needed to construct the integral Frobenius $\sigma_p$, they are expressed with respect to the
$\qu$-basis $(1, a)$ of $E_f$ used by Magma to parametrized the coefficient field $E_f$. The fifth and sixth columns respectively give the prime factorizations of
the ideals $\b_p$ and $\b_{O_L}$ in $O_{E_f}$.\footnote{These ideals are always defined in our computations as we never found a prime $p$ for which $a_p^2-4p=0$.}
If $\ell$ is a rational primes which is does not split in $E_f$, then the corresponding prime of $O_{E_f}$ is denoted by $(\ell)$ or $\lambda_\ell$, according to whether
$\ell$ is inert or ramifies, respectively. If $\ell$ is split, then the corresponding primes are denoted by $\lambda_{\ell, 1}$ and $\lambda_{\ell, 2}$.

In every table we listed all primes $p\leq 1997$ where the given surface has good reduction and such that the order $O_{E_f}[\pi_p]$ is not the maximal order of $L=E[\pi_p]$.
When we were not able to apply Bisson's algorithm (or when the algorithm did not terminate), a dash (-) appears in place of the entries $u_p$ and $b_p$. In certain
cases we did obtain an output from Bisson's algorithm even though its basic assumptions on the input surface $A_{f,p}$ were not satisfied. These primes appear marked
in the tables: the symbol $(*)$ indicates that $A_{f,p}$ is not ordinary, and the symbol $(**)$ denotes that it is not absolutely simple, but just $\f_p$-simple.

Finally, in every example considered, the coefficient field $E_f$ is the real quadratic field $\qu(\sqrt 5)$ of discriminant $5$, and the order $O_f$ is the maximal order.
In the last two examples, the Galois representation on the $2$-torsion $A_f[2]$ defines two extensions of $\qu$ with Galois group isomorphic to $A_5$, the alternating group
in $5$ letters. The computation of the integral Frobenius, when successful, reveal the primes that are completely split in these extensions.

\subsection{First example}

Let $f\in S_2(\Gamma_0(23))$ be the unique normalized cusp form of weight $2$ and level $23$. The element $a\in E_f$ used by Magma to parametrize $E_f$
has minimal polynomial $x^2+x-1$. The first few coefficients of the Fourier expansion of $f$ are
\[
f= q + aq^2 - (2a+1)q^3 - (a+1)q^4 + 2aq^5+\ldots
\]
In Table~\ref{tab:1} we can experimentally observe a reducibility phenomenon predicted by a famous result of Mazur (see \cite{Maz}): since the prime
$11$ divides $N-1$, Mazur predicts the existence of a prime $\lambda$ of $E_f$ lying above $11$ such that
\[
\bar\rho_\lambda \simeq \textrm{1}\oplus\chi_{11},
\]
where $\bar\rho_\lambda$ is the residual Galois representation of $\rho_\lambda$, $\chi_{11}$ is the mod $11$ cyclotomic character, and $1$ is the trivial
character. The consequence of this result relevant for our computation is that for every prime $p\neq 23$ with $p\equiv 1\text{ mod 23}$ the ideal $\b_p$
appearing in the definition of the integral Frobenius is divisible by $\lambda$. Such ideal $\lambda$ is denote by $\lambda_{11,1}$ in the table.

\begin{table}[ht]
\caption{Integral Frobenius for $A_f$, where $f\in S_2(\Gamma_0(23))$}
\label{tab:1}       % Give a unique label
%
% Follow this input for your own table layout
%
\begin{tabular}{p{1.5cm}p{1.7cm}p{1.5cm}p{1.5cm}p{1.5cm}p{1.5cm}}
\hline\noalign{\smallskip}
$p$ & $a_p$ & $u_p$ & $b_p$ & $\textrm{Fac}(\b_p)$ & $\textrm{Fac}(\b_{O_L})$  \\
\hline\noalign{\smallskip}
$19$ & $-2$ & $-$ & $-$ & $-$ & $ (3)$ \\
$43$ & $0$ & $-$ & $-$ & $-$ & $ (2)$ \\
$53$ & $-2+4a$ & $0$ & $1$ & $(1)$ & $ (2)$ \\
$59$ & $4+4a$ & $1$ & $2$ & $ (2)$ & $ (2)$ \\
$61$ & $-2-8a$ & $0$ & $1$ & $(1)$ & $ (2)$ \\
$67$ & $-4+2a$ & $9+a$ & $2+3a$ & $ \lambda_{11 , 1}$ & $ \lambda_{11 , 1}$ \\
$89$ & $-8-4a$ & $7+9a$ & $2+3a$ & $ \lambda_{11 , 1}$ & $ \lambda_{11 , 1}$ \\
$101(**)$ & $2+4a$ & $1$ & $2$ & $ (2)$ & $ (2)$ \\
$149$ & $14+16a$ & $0$ & $1$ & $(1)$ & $ (2)$ \\
$167$ & $4-4a$ & $1$ & $2$ & $ (2)$ & $ (2)$ \\
$173$ & $18+8a$ & $1$ & $2$ & $ (2)$ & $ (2)$ \\
$199$ & $-16+6a$ & $3+3a$ & $2+3a$ & $ \lambda_{11 , 1}$ & $ \lambda_{11 , 1}$ \\
$211$ & $-16-12a$ & $1$ & $2$ & $ (2)$ & $ (2)$ \\
$223$ & $4$ & $-$ & $-$ & $-$ & $ (2)$ \\
$233$ & $-9+4a$ & $0$ & $1$ & $(1)$ & $ \lambda_{31 , 2}$ \\
\noalign{\smallskip}\hline\noalign{\smallskip}
\end{tabular}
\end{table}

\begin{table}
\begin{tabular}{p{1.5cm}p{1.7cm}p{1.5cm}p{1.5cm}p{1.5cm}p{1.5cm}}
\hline\noalign{\smallskip}
$p$ & $a_p$ & $u_p$ & $b_p$ & $\textrm{Fac}(\b_p)$ & $\textrm{Fac}(\b_{O_L})$  \\
\hline\noalign{\smallskip}
$271$ & $8$ & $-$ & $-$ & $-$ & $ (2)$ \\
$307$ & $12-4a$ & $1$ & $2$ & $ (2)$ & $ (2)$ \\
$311$ & $7+10a$ & $0$ & $1$ & $(1)$ & $ \lambda_5$ \\
$317$ & $18+12a$ & $1$ & $2$ & $ (2)$ & $ (2)$ \\
$331$ & $-11-14a$ & $4a$ & $2+3a$ & $ \lambda_{11 , 1}$ & $ \lambda_{11 , 1}$ \\
$347$ & $-16a$ & $1$ & $2$ & $ (2)$ & $ (2)$ \\
$353$ & $-3+20a$ & $4+10a$ & $2+3a$ & $ \lambda_{11 , 1}$ & $ \lambda_{11 , 1}$ \\
$379$ & $12+20a$ & $0$ & $1$ & $(1)$ & $ (2)$ \\
$383$ & $12-8a$ & $0$ & $1$ & $(1)$ & $ (2)$ \\
$397$ & $-17-12a$ & $8+5a$ & $2+3a$ & $ \lambda_{11 , 1}$ & $ \lambda_{11 , 1}$ \\
$401$ & $-8-10a$ & $0$ & $1$ & $(1)$ & $ \lambda_5$ \\
$409$ & $9+20a$ & $0$ & $1$ & $(1)$ & $ \lambda_5$ \\
$419$ & $-12+12a$ & $10+8a$ & $2+3a$ & $ \lambda_{11 , 1}$ & $ (2)\lambda_{11 , 1}$ \\
$431$ & $-20+4a$ & $0$ & $1$ & $(1)$ & $ (2)$ \\
$449$ & $-10-8a$ & $1$ & $2$ & $ (2)$ & $ (2)$ \\
$463$ & $-20$ & $-$ & $-$ & $-$ & $ (2)\lambda_{11 , 2}\lambda_{11 , 1}$ \\
$563$ & $-28-8a$ & $0$ & $1$ & $(1)$ & $ (2)$ \\
$569$ & $-16-10a$ & $0$ & $1$ & $(1)$ & $ \lambda_5$ \\
$593$ & $2-8a$ & $1$ & $2$ & $ (2)$ & $ (2)$ \\
$599$ & $24+16a$ & $1$ & $2$ & $ (2)$ & $ (2)$ \\
$607$ & $24+4a$ & $1$ & $2$ & $ (2)$ & $ (2)$ \\
$617$ & $-10+4a$ & $8+5a$ & $2+3a$ & $ \lambda_{11 , 1}$ & $ (2)\lambda_{11 , 1}$ \\
$619$ & $12+12a$ & $0$ & $1$ & $(1)$ & $ (2)$ \\
$631$ & $20a$ & $0$ & $1$ & $(1)$ & $ (2)$ \\
$661$ & $-18-8a$ & $5+6a$ & $2+3a$ & $ \lambda_{11 , 1}$ & $ (2)\lambda_{11 , 1}$ \\
$677$ & $18$ & $-$ & $-$ & $-$ & $ (2)$ \\
$683$ & $13+22a$ & $1$ & $2+3a$ & $ \lambda_{11 , 1}$ & $ \lambda_{11 , 1}$ \\
$691$ & $12-8a$ & $1$ & $2$ & $ (2)$ & $ (2)$ \\
$719$ & $-8+8a$ & $1$ & $2$ & $ (2)$ & $ (2)\lambda_{11 , 2}$ \\
$727$ & $-24-6a$ & $10+8a$ & $2+3a$ & $ \lambda_{11 , 1}$ & $ \lambda_{11 , 1}$ \\
$751$ & $-12+20a$ & $0$ & $1$ & $(1)$ & $ (2)\lambda_5$ \\
$787$ & $32-12a$ & $0$ & $1$ & $(1)$ & $ (2)$ \\
$797$ & $-22-20a$ & $0$ & $1$ & $(1)$ & $ (2)$ \\
$809(**)$ & $22-16a$ & $1$ & $2$ & $ (2)$ & $ (2)$ \\
$821$ & $-34-8a$ & $1$ & $2$ & $ (2)$ & $ (2)^2$ \\
$827$ & $4-4a$ & $0$ & $1$ & $(1)$ & $ (2)$ \\
$829$ & $18+36a$ & $1$ & $2$ & $ (2)$ & $ (2)$ \\
$853$ & $-18+12a$ & $1$ & $2$ & $ (2)$ & $ (2)$ \\
$859$ & $-13-6a$ & $10+8a$ & $2+3a$ & $ \lambda_{11 , 1}$ & $ \lambda_{11 , 1}$ \\
$877$ & $-34-4a$ & $1$ & $2$ & $ (2)$ & $ (2)$ \\
$881$ & $38+10a$ & $8+5a$ & $2+3a$ & $ \lambda_{11 , 1}$ & $ \lambda_5\lambda_{11 , 1}$ \\
$883$ & $4$ & $-$ & $-$ & $-$ & $ (2)$ \\
\noalign{\smallskip}\hline\noalign{\smallskip}
\end{tabular}
\end{table}

\begin{table}
\begin{tabular}{p{1.5cm}p{1.7cm}p{1.5cm}p{1.5cm}p{1.5cm}p{1.5cm}}
\hline\noalign{\smallskip}
$p$ & $a_p$ & $u_p$ & $b_p$ & $\textrm{Fac}(\b_p)$ & $\textrm{Fac}(\b_{O_L})$  \\
\hline\noalign{\smallskip}
$911(**)$ & $14+28a$ & $1+2a$ & $3$ & $ (3)$ & $ (3)$ \\
$941$ & $-2+14a$ & $0$ & $1$ & $(1)$ & $ (3)$ \\
$947$ & $-17+10a$ & $8+5a$ & $2+3a$ & $ \lambda_{11 , 1}$ & $ \lambda_{11 , 1}$ \\
$953$ & $18+4a$ & $0$ & $1$ & $(1)$ & $ (2)$ \\
$991$ & $24$ & $-$ & $-$ & $-$ & $ (2)\lambda_{11 , 2}\lambda_{11 , 1}$ \\
$997$ & $2+24a$ & $1$ & $2$ & $ (2)$ & $ (2)$ \\
$1009(**)$ & $6+12a$ & $0$ & $1$ & $(1)$ & $ (2)$ \\
$1013$ & $-29-8a$ & $2+7a$ & $2+3a$ & $ \lambda_{11 , 1}$ & $ \lambda_{11 , 1}$ \\
$1069$ & $26+18a$ & $0$ & $1$ & $(1)$ & $ (3)$ \\
$1091$ & $4+40a$ & $0$ & $1$ & $(1)$ & $ (2)$ \\
$1097$ & $-18-24a$ & $1$ & $2$ & $ (2)$ & $ (2)$ \\
$1117$ & $14-28a$ & $1$ & $2$ & $ (2)$ & $ (2)$ \\
$1123$ & $-34+12a$ & $5+6a$ & $2+3a$ & $ \lambda_{11 , 1}$ & $ \lambda_{11 , 1}$ \\
$1151(**)$ & $-24-48a$ & $1$ & $2$ & $ (2)$ & $ (2)$ \\
$1163$ & $-8-20a$ & $1$ & $2$ & $ (2)$ & $ (2)$ \\
$1171$ & $16-18a$ & $0$ & $1$ & $(1)$ & $ (3)$ \\
$1181$ & $-2-16a$ & $1$ & $2$ & $ (2)$ & $ (2)$ \\
$1213$ & $28+36a$ & $0$ & $1$ & $(1)$ & $ (3)$ \\
$1217$ & $4-28a$ & $0$ & $1$ & $(1)$ & $ (3)$ \\
$1231$ & $-16-24a$ & $1$ & $2$ & $ (2)$ & $ (2)$ \\
$1259$ & $-24-12a$ & $0$ & $1$ & $(1)$ & $ (2)$ \\
$1277$ & $-7-8a$ & $2+7a$ & $2+3a$ & $ \lambda_{11 , 1}$ & $ \lambda_{11 , 1}$ \\
$1279$ & $-24-42a$ & $10+a$ & $1+3a$ & $ \lambda_{11 , 2}$ & $ \lambda_{11 , 2}$ \\
$1301$ & $47+4a$ & $0$ & $1$ & $(1)$ & $ (3)$ \\
$1303$ & $12+20a$ & $0$ & $1$ & $(1)$ & $ (2)$ \\
$1319$ & $4-16a$ & $1$ & $2$ & $ (2)$ & $ (2)$ \\
$1321$ & $8-24a$ & $4+10a$ & $2+3a$ & $ \lambda_{11 , 1}$ & $ \lambda_{11 , 1}$ \\
$1409$ & $-31-44a$ & $1$ & $2+3a$ & $ \lambda_{11 , 1}$ & $ \lambda_{11 , 1}$ \\
$1451$ & $-8+32a$ & $1$ & $2$ & $ (2)$ & $ (2)$ \\
$1453$ & $2$ & $-$ & $-$ & $-$ & $ (2)\lambda_{11 , 2}\lambda_{11 , 1}$ \\
$1459$ & $66+10a$ & $0$ & $1$ & $(1)$ & $ \lambda_5$ \\
$1481$ & $6+8a$ & $1$ & $2$ & $ (2)$ & $ (2)$ \\
$1483$ & $-36+8a$ & $0$ & $1$ & $(1)$ & $ (2)$ \\
$1489$ & $-4+36a$ & $0$ & $1$ & $(1)$ & $ (3)$ \\
$1499$ & $-13+2a$ & $0$ & $1$ & $(1)$ & $ \lambda_{11 , 2}$ \\
$1523$ & $-24-56a$ & $0$ & $1$ & $(1)$ & $ (2)$ \\
$1543$ & $-41-18a$ & $2$ & $3$ & $ (3)$ & $ (3)$ \\
$1549$ & $43$ & $-$ & $-$ & $-$ & $ (3)$ \\
$1553$ & $-6-8a$ & $1$ & $2$ & $ (2)$ & $ (2)$ \\
$1559$ & $39-10a$ & $0$ & $1$ & $(1)$ & $ \lambda_5$ \\
$1607$ & $-46-28a$ & $10+8a$ & $2+3a$ & $ \lambda_{11 , 1}$ & $ \lambda_{11 , 1}$ \\
$1613$ & $18+48a$ & $1$ & $2$ & $ (2)$ & $ (2)$ \\
\noalign{\smallskip}\hline\noalign{\smallskip}
\end{tabular}
\end{table}

\begin{table}[ht]
\begin{tabular}{p{1.5cm}p{1.7cm}p{1.5cm}p{1.5cm}p{1.5cm}p{1.5cm}}
\hline\noalign{\smallskip}
$p$ & $a_p$ & $u_p$ & $b_p$ & $\textrm{Fac}(\b_p)$ & $\textrm{Fac}(\b_{O_L})$  \\
\hline\noalign{\smallskip}
$1663$ & $-8-44a$ & $0$ & $1$ & $(1)$ & $ (2)$ \\
$1667$ & $-36-48a$ & $0$ & $1$ & $(1)$ & $ (2)$ \\
$1669$ & $-38-32a$ & $3$ & $4$ & $ (2)^2$ & $ (2)$ \\
$1697$ & $-38-8a$ & $1$ & $2$ & $ (2)$ & $ (2)$ \\
$1721(**)$ & $4+8a$ & $0$ & $1$ & $(1)$ & $ (3)^2$ \\
$1733$ & $-47-4a$ & $0$ & $1$ & $(1)$ & $ (3)$ \\
$1783$ & $-57-6a$ & $10+8a$ & $2+3a$ & $ \lambda_{11 , 1}$ & $ \lambda_{11 , 1}$ \\
$1787$ & $40-4a$ & $1$ & $2$ & $ (2)$ & $ (2)$ \\
$1789$ & $-18+16a$ & $1$ & $2$ & $ (2)$ & $ (2)$ \\
$1811$ & $28+52a$ & $0$ & $1$ & $(1)$ & $ (2)$ \\
$1831$ & $-52-10a$ & $0$ & $1$ & $(1)$ & $ \lambda_5$ \\
$1861$ & $-30-44a$ & $0$ & $1$ & $(1)$ & $ (2)$ \\
$1867$ & $20+44a$ & $1$ & $2$ & $ (2)$ & $ (2)$ \\
$1871$ & $-12+12a$ & $21+8a$ & $4+6a$ & $ (2)\lambda_{11 , 1}$ & $ (2)\lambda_{11 , 1}$ \\
$1873$ & $-38-8a$ & $0$ & $1$ & $(1)$ & $ (2)$ \\
$1877$ & $22+32a$ & $0$ & $1$ & $(1)$ & $ (2)^2$ \\
$1879$ & $-20-12a$ & $1$ & $2$ & $ (2)$ & $ (2)$ \\
$1889$ & $-2-44a$ & $1$ & $2$ & $ (2)$ & $ (2)$ \\
$1901$ & $-14+8a$ & $2+a$ & $3$ & $ (3)$ & $ (2)(3)$ \\
$1913$ & $-62-16a$ & $1$ & $2$ & $ (2)$ & $ (2)$ \\
$1931$ & $2+20a$ & $0$ & $1$ & $(1)$ & $ \lambda_5$ \\
$1949$ & $-58-20a$ & $0$ & $1$ & $(1)$ & $ (2)$ \\
$1997$ & $-46-8a$ & $0$ & $1$ & $(1)$ & $ (2)$ \\
\noalign{\smallskip}\hline\noalign{\smallskip}
\end{tabular}
%$^a$ Table foot note (with superscript)
\end{table}

\subsection{Second example}

Let now $f\in S_2(\Gamma_0(125))$ be the normalized cusp form of weight $2$ and level $125$ lying in the first Galois orbit of eigenforms. The element $a\in E_f$ has also
in this case minimal polynomial $x^2+x-1$. The first few coefficients of the Fourier expansion of $f$ are
\[
f= q + aq^2 - (a+2)q^3 - (a+1)q^4 -(a+1)q^6+\ldots
\]
Consider the Galois representation
\begin{equation}\label{eq2tor}
\bar\rho_{(2)}:\gq\longrightarrow\aut_{O_{E_f}/(2)}(A_f[2])\simeq\gl_2(O_{E_f}/(2)).
\end{equation}
defined by the $2$-torsion $A_f[2]$ of $A_f$. The rational prime $2$ is inert in $E_f\simeq\qu(\sqrt 5)$, denote by $\f_4$ its residue field.
Since $\bar\rho_{(2)}$ has trivial determinant we see that $\bar\rho_{(2)}$ is valued in the special linear group $\sl_2(\f)$, which is isomorphic to
$A_5$, the alternating group in five letters.

After analyzing the reduction modulo $2$ of the first few Hecke eigenvalues of $f$, and using elementary group theory, one can deduce that
\begin{equation}\label{eq:surjA5}
\textrm{Im}(\bar\rho_{(2)})\simeq \sl_2(\f),
\end{equation}
i.e., $\bar\rho_{(2)}$ defines an $A_5$-extension $K/\qu$. According to Corollary~\label{cor:split}, we have that a rational prime $p\nmid 2\cdot 5$
splits completely in $K$ if and only if $(2)$ divides $\b_p$, which, by Chebotarev, happens for a set of primes of density $1/60\sim 0,017$.
In Table~\ref{tab:2} we observe this splitting phenomenon for $p=887, 1657$ and $1699$.

Lastly, notice that for every prime $p\equiv 1\text{ mod $5$}$ for which we were able to compute $\sigma_p$, we have that the unique prime
of $E_f$ lying above $5$ divides $\b_p$. Reasoning as in the first example, this suggest that there is a decomposition
\[
\bar\rho_{\lambda_5}\simeq \textrm{1}\oplus\chi_5,
\]
where $\chi_5$ denotes the mod $5$ cyclotomic character. However, with our methods we are not able to prove this.

\begin{table}[ht]
\caption{Integral Frobenius for $A_f$, where $f$ lies in the first Galois orbit of $S_2(\Gamma_0(125))$}\label{tab:2}
\begin{tabular}{p{1.5cm}p{1.7cm}p{1.5cm}p{1.5cm}p{1.5cm}p{1.5cm}}
\hline\noalign{\smallskip}
$p$ & $a_p$ & $u_p$ & $b_p$ & $\textrm{Fac}(\b_p)$ & $\textrm{Fac}(\b_{O_L})$  \\
\hline\noalign{\smallskip}
$11$ & $-3$ & $-$ & $-$ & $-$ & $ \lambda_5$ \\
$31(*)$ & $-3-5a$ & $1$ & $1+2a$ & $ \lambda_5$ & $ \lambda_5$ \\
$41$ & $-3$ & $-$ & $-$ & $-$ & $ \lambda_5$ \\
$61$ & $2+5a$ & $-$ & $-$ & $-$ & $ \lambda_5\lambda_{19 , 1}$ \\
$71$ & $-3$ & $-$ & $-$ & $-$ & $ \lambda_5^2$ \\
$89(**)$ & $6+12a$ & $0$ & $1$ & $(1)$ & $ (2)$ \\
$101$ & $-3$ & $-$ & $-$ & $-$ & $ \lambda_5$ \\
$131$ & $12+15a$ & $1$ & $1+2a$ & $ \lambda_5$ & $ \lambda_5$ \\
$137$ & $4-a$ & $2+a$ & $3$ & $ (3)$ & $ (3)$ \\
$151$ & $-13+5a$ & $1$ & $1+2a$ & $ \lambda_5$ & $ \lambda_5$ \\
$173$ & $-13-8a$ & $1+2a$ & $3$ & $ (3)$ & $ (3)$ \\
$181$ & $2-10a$ & $1$ & $1+2a$ & $ \lambda_5$ & $ \lambda_5$ \\
$191$ & $12$ & $-$ & $-$ & $-$ & $ (2)\lambda_5$ \\
$211$ & $12+10a$ & $1$ & $1+2a$ & $ \lambda_5$ & $ \lambda_5$ \\
$229$ & $-3-a$ & $0$ & $1$ & $(1)$ & $ \lambda_{11 , 1}$ \\
$233$ & $-1+16a$ & $1+2a$ & $3$ & $ (3)$ & $ (3)$ \\
$241$ & $-3+10a$ & $1$ & $1+2a$ & $ \lambda_5$ & $ \lambda_5$ \\
$251$ & $-18-15a$ & $1$ & $1+2a$ & $ \lambda_5$ & $ \lambda_5$ \\
$271$ & $12-5a$ & $1$ & $1+2a$ & $ \lambda_5$ & $ \lambda_5$ \\
$281$ & $12$ & $-$ & $-$ & $-$ & $ \lambda_5(7)$ \\
$311$ & $-3-15a$ & $1$ & $1+2a$ & $ \lambda_5$ & $ \lambda_5$ \\
$313$ & $-12a$ & $0$ & $1$ & $(1)$ & $ \lambda_{11 , 1}$ \\
$317$ & $-14+8a$ & $2+a$ & $3$ & $ (3)$ & $ (2)(3)$ \\
$331$ & $-13+5a$ & $1$ & $1+2a$ & $ \lambda_5$ & $ \lambda_5$ \\
$353$ & $-22-16a$ & $0$ & $1$ & $(1)$ & $ (2)$ \\
$379$ & $7+9a$ & $2$ & $3$ & $ (3)$ & $ (3)$ \\
$401$ & $12$ & $-$ & $-$ & $-$ & $ \lambda_5$ \\
$421$ & $17+5a$ & $1$ & $1+2a$ & $ \lambda_5$ & $ \lambda_5$ \\
$431$ & $12+15a$ & $1$ & $1+2a$ & $ \lambda_5$ & $ \lambda_5$ \\
$439$ & $1-18a$ & $2$ & $3$ & $ (3)$ & $ (3)$ \\
\noalign{\smallskip}\hline\noalign{\smallskip}
\end{tabular}
%$^a$ Table foot note (with superscript)
\end{table}

\begin{table}
\begin{tabular}{p{1.5cm}p{1.7cm}p{1.5cm}p{1.5cm}p{1.5cm}p{1.5cm}}
\hline\noalign{\smallskip}
$p$ & $a_p$ & $u_p$ & $b_p$ & $\textrm{Fac}(b_p)$ & $\textrm{Fac}(b_{O_L})$  \\
\hline\noalign{\smallskip}
$457$ & $-18$ & $-$ & $-$ & $-$ & $ (2)^2$ \\
$461$ & $12-15a$ & $1$ & $1+2a$ & $ \lambda_5$ & $ \lambda_5$ \\
$491$ & $12-15a$ & $1$ & $1+2a$ & $ \lambda_5$ & $ \lambda_5$ \\
$503$ & $8-11a$ & $1+2a$ & $3$ & $ (3)$ & $ (3)$ \\
$509(**)$ & $-6-12a$ & $0$ & $1$ & $(1)$ & $ (2)$ \\
$521$ & $-18-15a$ & $1$ & $1+2a$ & $ \lambda_5$ & $ \lambda_5$ \\
$541$ & $-18+10a$ & $1$ & $1+2a$ & $ \lambda_5$ & $ \lambda_5$ \\
$547$ & $-27-3a$ & $-$ & $-$ & $-$ & $ \lambda_{59 , 1}$ \\
$557$ & $-8+20a$ & $2+a$ & $3$ & $ (3)$ & $ (3)$ \\
$563$ & $20+8a$ & $0$ & $1$ & $(1)$ & $ (2)$ \\
$571$ & $-13$ & $-$ & $-$ & $-$ & $ \lambda_5(3)$ \\
$587$ & $4+4a$ & $0$ & $1$ & $(1)$ & $ (2)$ \\
$601$ & $-33-20a$ & $1$ & $1+2a$ & $ \lambda_5$ & $ \lambda_5$ \\
$631$ & $2$ & $-$ & $-$ & $-$ & $ \lambda_5(3)$ \\
$641$ & $-33-15a$ & $1$ & $1+2a$ & $ \lambda_5$ & $ \lambda_5$ \\
$647$ & $-17+2a$ & $2+a$ & $3$ & $ (3)$ & $ (3)$ \\
$661$ & $-18-20a$ & $2+2a$ & $1+2a$ & $ \lambda_5$ & $ (2)\lambda_5$ \\
$677$ & $30+16a$ & $0$ & $1$ & $(1)$ & $ (2)$ \\
$691$ & $42+10a$ & $1$ & $1+2a$ & $ \lambda_5$ & $ \lambda_5$ \\
$701$ & $27+15a$ & $1$ & $1+2a$ & $ \lambda_5$ & $ \lambda_5$ \\
$727$ & $-24a$ & $0$ & $1$ & $(1)$ & $ (2)$ \\
$743$ & $-34-5a$ & $1+2a$ & $3$ & $ (3)$ & $ (3)$ \\
$751$ & $17+20a$ & $1$ & $1+2a$ & $ \lambda_5$ & $ \lambda_5$ \\
$757$ & $27$ & $-$ & $-$ & $-$ & $ \lambda_{11 , 2}\lambda_{11 , 1}$ \\
$761$ & $-18$ & $-$ & $-$ & $-$ & $ (2)^3\lambda_5$ \\
$811$ & $-28-10a$ & $1$ & $1+2a$ & $ \lambda_5$ & $ \lambda_5$ \\
$821$ & $-3-15a$ & $1$ & $1+2a$ & $ \lambda_5$ & $ \lambda_5$ \\
$859$ & $4+18a$ & $2$ & $3$ & $ (3)$ & $ (3)$ \\
$863$ & $-10-2a$ & $1+2a$ & $3$ & $ (3)$ & $ (3)$ \\
$881$ & $-3-30a$ & $1$ & $1+2a$ & $ \lambda_5$ & $ \lambda_5$ \\
$887$ & $-36+4a$ & $1$ & $2$ & $ (2)$ & $ (2)$ \\
$911$ & $12+30a$ & $1$ & $1+2a$ & $ \lambda_5$ & $ \lambda_5$ \\
$941$ & $-3+15a$ & $1$ & $1+2a$ & $ \lambda_5$ & $ \lambda_5$ \\
$971$ & $-3+30a$ & $1$ & $1+2a$ & $ \lambda_5$ & $ \lambda_5$ \\
$991$ & $-43-10a$ & $1$ & $1+2a$ & $ \lambda_5$ & $ \lambda_5$ \\
$1021$ & $17+20a$ & $1$ & $1+2a$ & $ \lambda_5$ & $ \lambda_5$ \\
$1031$ & $-3-30a$ & $1$ & $1+2a$ & $ \lambda_5$ & $ \lambda_5$ \\
$1051$ & $-28-45a$ & $1$ & $3+6a$ & $ \lambda_5(3)$ & $ \lambda_5(3)$ \\
$1061$ & $27+30a$ & $1$ & $1+2a$ & $ \lambda_5$ & $ \lambda_5$ \\
$1091$ & $-3-30a$ & $1$ & $1+2a$ & $ \lambda_5$ & $ \lambda_5$ \\
$1097$ & $-17+2a$ & $2+a$ & $3$ & $ (3)$ & $ (3)$ \\
$1151$ & $12+45a$ & $1$ & $1+2a$ & $ \lambda_5$ & $ \lambda_5$ \\
\noalign{\smallskip}\hline\noalign{\smallskip}
\end{tabular}
%$^a$ Table foot note (with superscript)
\end{table}

\begin{table}
\begin{tabular}{p{1.5cm}p{1.7cm}p{1.5cm}p{1.5cm}p{1.5cm}p{1.5cm}}
\hline\noalign{\smallskip}
$p$ & $a_p$ & $u_p$ & $b_p$ & $\textrm{Fac}(b_p)$ & $\textrm{Fac}(b_{O_L})$  \\
\hline\noalign{\smallskip}
$1171$ & $-3+25a$ & $1$ & $1+2a$ & $ \lambda_5$ & $ \lambda_5^2$ \\
$1181$ & $-18-15a$ & $1$ & $1+2a$ & $ \lambda_5$ & $ \lambda_5$ \\
$1193$ & $-24+20a$ & $0$ & $1$ & $(1)$ & $ \lambda_{11 , 1}$ \\
$1201$ & $-3-5a$ & $1$ & $1+2a$ & $ \lambda_5$ & $ \lambda_5$ \\
$1231$ & $-18-5a$ & $1$ & $1+2a$ & $ \lambda_5$ & $ \lambda_5$ \\
$1291$ & $2+20a$ & $1$ & $1+2a$ & $ \lambda_5$ & $ \lambda_5$ \\
$1301$ & $-18+30a$ & $1$ & $1+2a$ & $ \lambda_5$ & $ \lambda_5$ \\
$1321$ & $2-40a$ & $2+2a$ & $1+2a$ & $ \lambda_5$ & $ (2)\lambda_5$ \\
$1361$ & $42$ & $-$ & $-$ & $-$ & $ (2)\lambda_5$ \\
$1367$ & $1-7a$ & $2+a$ & $3$ & $ (3)$ & $ (3)$ \\
$1381$ & $27+25a$ & $1$ & $1+2a$ & $ \lambda_5$ & $ \lambda_5$ \\
$1399$ & $-50$ & $-$ & $-$ & $-$ & $ (3)$ \\
$1433$ & $14-4a$ & $0$ & $1$ & $(1)$ & $ (2)$ \\
$1451$ & $12$ & $-$ & $-$ & $-$ & $ (2)\lambda_5$ \\
$1471$ & $-18-20a$ & $1$ & $1+2a$ & $ \lambda_5$ & $ \lambda_5$ \\
$1481$ & $-48-15a$ & $1$ & $1+2a$ & $ \lambda_5$ & $ \lambda_5$ \\
$1511$ & $-3+30a$ & $1$ & $1+2a$ & $ \lambda_5$ & $ \lambda_5$ \\
$1531$ & $2$ & $-$ & $-$ & $-$ & $ \lambda_5(3)$ \\
$1549$ & $-20-45a$ & $2$ & $3$ & $ (3)$ & $ (3)$ \\
$1571$ & $-18+30a$ & $1$ & $1+2a$ & $ \lambda_5$ & $ \lambda_5$ \\
$1583$ & $-1+16a$ & $1+2a$ & $3$ & $ (3)$ & $ (3)$ \\
$1601$ & $-3+15a$ & $1$ & $1+2a$ & $ \lambda_5$ & $ \lambda_5$ \\
$1607$ & $-48-20a$ & $0$ & $1$ & $(1)$ & $ (2)$ \\
$1621$ & $47+45a$ & $1$ & $3+6a$ & $ \lambda_5(3)$ & $ \lambda_5(3)$ \\
$1657$ & $42+60a$ & $1$ & $2$ & $ (2)$ & $ (2)$ \\
$1663$ & $-60-12a$ & $0$ & $1$ & $(1)$ & $ (2)$ \\
$1669$ & $-32-9a$ & $2$ & $3$ & $ (3)$ & $ (3)$ \\
$1699$ & $40+40a$ & $1$ & $2$ & $ (2)$ & $ (2)$ \\
$1721$ & $-3-30a$ & $1$ & $1+2a$ & $ \lambda_5$ & $ \lambda_5$ \\
$1741$ & $-13+20a$ & $1$ & $1+2a$ & $ \lambda_5$ & $ \lambda_5$ \\
$1777$ & $-30-24a$ & $0$ & $1$ & $(1)$ & $ (2)$ \\
$1789$ & $1-18a$ & $2$ & $3$ & $ (3)$ & $ (3)$ \\
$1801$ & $2-10a$ & $1$ & $1+2a$ & $ \lambda_5$ & $ \lambda_5$ \\
$1811$ & $-63$ & $-$ & $-$ & $-$ & $ \lambda_5^2$ \\
$1823$ & $-43-23a$ & $-$ & $-$ & $-$ & $ (3)$ \\
$1831$ & $12-35a$ & $1$ & $1+2a$ & $ \lambda_5$ & $ \lambda_5$ \\
$1861$ & $17+45a$ & $1$ & $3+6a$ & $ \lambda_5(3)$ & $ \lambda_5(3)$ \\
$1871$ & $27$ & $-$ & $-$ & $-$ & $ \lambda_5$ \\
$1901$ & $27+30a$ & $1$ & $1+2a$ & $ \lambda_5$ & $ \lambda_5$ \\
$1931$ & $27+45a$ & $1$ & $1+2a$ & $ \lambda_5$ & $ \lambda_5$ \\
$1951$ & $-33-20a$ & $1$ & $1+2a$ & $ \lambda_5$ & $ \lambda_5$ \\
\noalign{\smallskip}\hline\noalign{\smallskip}
\end{tabular}
%$^a$ Table foot note (with superscript)
\end{table}

\subsection{Third example}

In our last example we consider a normalized cuspidal $f\in S_2(\Gamma_0(133))$ of weight $2$ and conductor $133$ lying in the first Galois orbit of eigenforms.
The first few coefficients of the Fourier expansion of $f$ are
\[
f= q + aq^2 +aq^3 -3 (a+1)q^4 - (2a+3)q^5-(3a+1)q^6+\ldots
\]
where $a\in E_f$ has minimal polynomial $x^2+3x+1$. The same argument used in the second example shows that $\bar\rho_{(2)}$ defines an $A_5$-extension
$K/\qu$. Looking at Table~\ref{tab:3}, we observe that the primes $839, 941, 1663, 1783$ and $1789$ are completely split in $K$.

\begin{table}[ht]
\caption{Integral Frobenius for $A_f$, where $f$ lies in the first Galois orbit of $S_2(\Gamma_0(133))$}\label{tab:3}
\begin{tabular}{p{1.5cm}p{1.7cm}p{1.5cm}p{1.5cm}p{1.5cm}p{1.5cm}}
\hline\noalign{\smallskip}
$p$ & $a_p$ & $u_p$ & $b_p$ & $\textrm{Fac}(b_p)$ & $\textrm{Fac}(b_{O_L})$  \\
\hline\noalign{\smallskip}
$5$ & $-3-2a$ & $-$ & $-$ & $-$ & $ \lambda_5$ \\
$11$ & $-3+a$ & $0$ & $1$ & $(1)$ & $ (3)$ \\
$29$ & $-3+a$ & $0$ & $1$ & $(1)$ & $ (3)$ \\
$47(**)$ & $-15-10a$ & $a$ & $3$ & $ (3)$ & $ (3)$ \\
$59$ & $-15-6a$ & $0$ & $1$ & $(1)$ & $ \lambda_{11 , 1}$ \\
$79$ & $-10$ & $-$ & $-$ & $-$ & $ (3)$ \\
$131$ & $-3-5a$ & $0$ & $1$ & $(1)$ & $ \lambda_5(3)$ \\
$137$ & $6-4a$ & $0$ & $1$ & $(1)$ & $ (2)$ \\
$173$ & $6+10a$ & $0$ & $1$ & $(1)$ & $ (3)$ \\
$181$ & $-25-9a$ & $0$ & $1$ & $(1)$ & $ (3)$ \\
$193$ & $-5-9a$ & $2$ & $3$ & $ (3)$ & $ (3)$ \\
$229$ & $-14-12a$ & $0$ & $1$ & $(1)$ & $ (2)$ \\
$239$ & $15+4a$ & $2a$ & $3$ & $ (3)$ & $ (3)$ \\
$251$ & $-3-11a$ & $0$ & $1$ & $(1)$ & $ (3)$ \\
$311$ & $3-5a$ & $4$ & $3+2a$ & $ \lambda_5$ & $ \lambda_5$ \\
$317$ & $30+12a$ & $0$ & $1$ & $(1)$ & $ (2)$ \\
$389$ & $15+11a$ & $0$ & $1$ & $(1)$ & $ \lambda_{19 , 2}$ \\
$431(**)$ & $-30-20a$ & $0$ & $1$ & $(1)$ & $ (3)$ \\
$439$ & $8$ & $-$ & $-$ & $-$ & $ (2)(3)$ \\
$443$ & $-12+a$ & $2a$ & $3$ & $ (3)$ & $ (3)$ \\
$449$ & $-12+7a$ & $2a$ & $3$ & $ (3)$ & $ (3)$ \\
$457$ & $28+9a$ & $0$ & $1$ & $(1)$ & $ (3)$ \\
$479$ & $51+25a$ & $0$ & $1$ & $(1)$ & $ \lambda_5$ \\
$491$ & $-12-4a$ & $0$ & $1$ & $(1)$ & $ (2)$ \\
$503(**)$ & $24+16a$ & $0$ & $1$ & $(1)$ & $ (2)(3)$ \\
$509$ & $30$ & $-$ & $-$ & $-$ & $ (2)$ \\
$541(**)$ & $18+12a$ & $0$ & $1$ & $(1)$ & $ (2)$ \\
$571$ & $-23-18a$ & $0$ & $1$ & $(1)$ & $ (3)$ \\
$599$ & $-6-5a$ & $2$ & $3+2a$ & $ \lambda_5$ & $ \lambda_5\lambda_{11 , 1}$ \\
$619$ & $10-9a$ & $0$ & $1$ & $(1)$ & $ (3)$ \\
\noalign{\smallskip}\hline\noalign{\smallskip}
\end{tabular}
%$^a$ Table foot note (with superscript)
\end{table}

\begin{table}
\begin{tabular}{p{1.5cm}p{1.7cm}p{1.5cm}p{1.5cm}p{1.5cm}p{1.5cm}}
\hline\noalign{\smallskip}
$p$ & $a_p$ & $u_p$ & $b_p$ & $\textrm{Fac}(b_p)$ & $\textrm{Fac}(b_{O_L})$  \\
\hline\noalign{\smallskip}
$631$ & $1+6a$ & $0$ & $1$ & $(1)$ & $ \lambda_{11 , 1}$ \\
$661$ & $-26-24a$ & $0$ & $1$ & $(1)$ & $ (2)$ \\
$677$ & $-42-19a$ & $0$ & $1$ & $(1)$ & $ (3)$ \\
$719$ & $12+16a$ & $0$ & $1$ & $(1)$ & $ (2)$ \\
$757$ & $10+12a$ & $0$ & $1$ & $(1)$ & $ (2)$ \\
$787$ & $20+12a$ & $0$ & $1$ & $(1)$ & $ (2)$ \\
$839$ & $24+20a$ & $1$ & $2$ & $ (2)$ & $ (2)\lambda_5$ \\
$857$ & $-69-37a$ & $a$ & $3$ & $ (3)$ & $ (3)$ \\
$911$ & $-6+8a$ & $a$ & $3$ & $ (3)$ & $ (3)$ \\
$941$ & $6-20a$ & $1$ & $2$ & $ (2)$ & $ (2)$ \\
$971$ & $-33-10a$ & $0$ & $1$ & $(1)$ & $ \lambda_5$ \\
$977$ & $10a$ & $-$ & $-$ & $-$ & $ \lambda_{89 , 1}$ \\
$983$ & $-57-26a$ & $2a$ & $3$ & $ (3)$ & $ (3)$ \\
$1051$ & $38+15a$ & $0$ & $1$ & $(1)$ & $ \lambda_5$ \\
$1061(**)$ & $3+2a$ & $a$ & $3$ & $ (3)$ & $ (3)$ \\
$1087$ & $37+27a$ & $2$ & $3$ & $ (3)$ & $ (3)$ \\
$1109$ & $6-4a$ & $0$ & $1$ & $(1)$ & $ (2)$ \\
$1117$ & $-7+18a$ & $0$ & $1$ & $(1)$ & $ (3)$ \\
$1217$ & $33-8a$ & $2a$ & $3$ & $ (3)$ & $ (3)$ \\
$1231$ & $-46-27a$ & $0$ & $1$ & $(1)$ & $ (3)$ \\
$1249$ & $-44-18a$ & $2$ & $3$ & $ (3)$ & $ (3)$ \\
$1259$ & $21+25a$ & $3$ & $3+2a$ & $ \lambda_5$ & $ \lambda_5$ \\
$1303$ & $-54-30a$ & $0$ & $1$ & $(1)$ & $ \lambda_{11 , 1}$ \\
$1361$ & $6-20a$ & $0$ & $1$ & $(1)$ & $ (2)$ \\
$1367$ & $-36-8a$ & $0$ & $1$ & $(1)$ & $ (2)$ \\
$1409$ & $66+32a$ & $0$ & $1$ & $(1)$ & $ (2)(3)$ \\
$1447$ & $-56-48a$ & $0$ & $1$ & $(1)$ & $ (2)$ \\
$1451$ & $-33-37a$ & $-$ & $-$ & $-$ & $ (3)$ \\
$1483$ & $62+45a$ & $1$ & $3$ & $ (3)$ & $ (3)$ \\
$1487$ & $-84-53a$ & $-$ & $-$ & $-$ & $ (3)$ \\
$1493$ & $-54-28a$ & $0$ & $1$ & $(1)$ & $ (2)$ \\
$1531$ & $43$ & $-$ & $-$ & $-$ & $ \lambda_5^2(3)$ \\
$1553$ & $-75-a$ & $0$ & $1$ & $(1)$ & $ (7)$ \\
$1567$ & $-38-18a$ & $0$ & $1$ & $(1)$ & $ (3)\lambda_{11 , 2}$ \\
$1609$ & $-17+15a$ & $-$ & $-$ & $-$ & $ \lambda_{109 , 1}$ \\
$1663$ & $44+24a$ & $1$ & $2$ & $ (2)$ & $ (2)$ \\
$1669$ & $49+18a$ & $2$ & $3$ & $ (3)$ & $ (3)$ \\
$1723$ & $56+24a$ & $0$ & $1$ & $(1)$ & $ (2)$ \\
$1733$ & $-24-13a$ & $0$ & $1$ & $(1)$ & $ (3)$ \\
$1741$ & $7+15a$ & $0$ & $1$ & $(1)$ & $ \lambda_5$ \\
$1753$ & $-10+27a$ & $1$ & $3$ & $ (3)$ & $ (3)$ \\
$1759$ & $14-9a$ & $0$ & $1$ & $(1)$ & $ (3)$ \\
\noalign{\smallskip}\hline\noalign{\smallskip}
\end{tabular}
%$^a$ Table foot note (with superscript)
\end{table}

\begin{table}[ht]
\begin{tabular}{p{1.5cm}p{1.7cm}p{1.5cm}p{1.5cm}p{1.5cm}p{1.5cm}}
\hline\noalign{\smallskip}
$p$ & $a_p$ & $u_p$ & $b_p$ & $\textrm{Fac}(b_p)$ & $\textrm{Fac}(b_{O_L})$  \\
\hline\noalign{\smallskip}

$1783$ & $-32+12a$ & $1$ & $2$ & $ (2)$ & $ (2)$ \\
$1789$ & $-6-12a$ & $1$ & $2$ & $ (2)$ & $ (2)$ \\
$1823$ & $-84-23a$ & $-$ & $-$ & $-$ & $ (3)$ \\
$1847(**)$ & $-48-32a$ & $1$ & $2$ & $ (2)$ & $ (2)$ \\
$1871$ & $24$ & $-$ & $-$ & $-$ & $ (2)$ \\
$1873$ & $-43-18a$ & $0$ & $1$ & $(1)$ & $ (3)$ \\
$1879$ & $-35$ & $-$ & $-$ & $-$ & $ (3)$ \\
$1889$ & $-105-52a$ & $-$ & $-$ & $-$ & $ (3)$ \\
$1907$ & $-60-7a$ & $0$ & $1$ & $(1)$ & $ (3)$ \\
$1933$ & $10-9a$ & $0$ & $1$ & $(1)$ & $ (3)$ \\
$1973$ & $-6-16a$ & $0$ & $1$ & $(1)$ & $ (2)$ \\
$1987$ & $-10-27a$ & $1$ & $3$ & $ (3)$ & $ (3)$ \\
\noalign{\smallskip}\hline\noalign{\smallskip}
\end{tabular}
%$^a$ Table foot note (with superscript)
\end{table}

%\begin{acknowledgement}
%We wish to thank Gebhard B\"ockle for all the useful discussions we had on this project. We thank Gaetan Bisson for an intense email correspondence
%where he helped us understanding better his work. The first author thanks Jordi Gu\`ardia and Josep Gonz\`alez for the warm hospitality he received
%during his visit to Universitat Polit\`ecnica de Catalunya in July 2015. This work was supported by the DFG Priority Program SPP 1489 and
%the Luxembourg FNR.
%\end{acknowledgement}

\bibliographystyle{amsplain}

\end{document}